\documentclass[12pt,reqno]{amsart}
\usepackage{amsthm,amsfonts,amssymb,euscript}

\setlength{\textwidth}{16.0cm} \setlength{\oddsidemargin}{0.5cm}
\setlength{\evensidemargin}{0.5cm}

\def\rev#1{\frac{1}{#1}}

\def\ddb#1{\sqrt{-1}\partial\bar{\partial}#1}

\newcommand{\R}{{\mathbb R}}
\newcommand{\C}{{\mathbb C}^{n}}

\newcommand{\B}{{\mathcal{B}}}

\newcommand{\ra}{{\rightarrow}}

\newcommand{\dr}{\omega}
\newcommand{\ka}{K\"{a}hler}

\theoremstyle{plain}
  \newtheorem{theorem}{Theorem}[section]
  \newtheorem{proposition}[theorem]{Proposition}
  \newtheorem{lemma}[theorem]{Lemma}

  \newtheorem{remark}[theorem]{Remark}

\theoremstyle{definition}
  \newtheorem{definition}[theorem]{Definition}

\numberwithin{equation}{section}
\begin{document}

\title[Conical K\"{a}hler-Einstein metrics]{Conic K\"{a}hler-Einstein metrics along simple normal crossing divisors on Fano manifolds}
\author{Aijin Lin}
\author{Liangming Shen}
\address{College of Science, National University of Defense Technology, Changsha, Hunan, P.R.China, 410073}
\email{aijinlin@pku.edu.cn}
\address{Department of Mathematics, The University of British Columbia, Vancouver, B.C., Canada V6T 1Z2}
\email{lmshen@math.ubc.ca}

\begin{abstract}

We prove that on one \ka-Einstein Fano manifold without holomorphic vector fields, there exists a unique conical \ka-Einstein metric along a simple normal crossing divisor with admissible prescribed cone angles. We also establish a curvature estimate for conic metrics along a simple normal crossing divisor which generalizes Li-Rubinstein's curvature estimate for one divisor case.

\end{abstract}

\maketitle


\section{Introduction}

Conic \ka\ metrics are very useful in the study of \ka\ geometry. Recently there are a lot of works on this topic \cite{Ber} \cite{Br} \cite{CGP} \cite{Do} \cite{GP} \cite{Je} \cite{JMR} \cite{LS} \cite{Ru} \cite{Sh1} \cite{SW} \cite{Ti14} \cite{TZ} \cite{Yao}. It played a crucial role in the solution to Yau-Tian-Donaldson Conjecture \cite{Ti12} \cite{CDS1}
\cite{CDS2} \cite{CDS3} on Fano manifolds. In this solution one important part is the analysis of the continuity path of the cone angle, which has been studied in \cite{Do}.
The idea is to investigate the behavior of the solutions to conical \ka-Einstein metrics as the angle tends to 1. Then the stability condition guarantees that the solution could be extended, which generates a smooth \ka-Einstein metric. On the other hand, we can consider the existence of conical \ka-Einstein metrics assuming the existence of smooth \ka-Einstein metrics, which could be studied along the continuity path of decreasing cone angles. Related works in this direction could be found in \cite{LS} \cite{SW} \cite{TZ} in case of one smooth divisor. In \cite{SW}, Song-Wang also considered the case of simple normal crossing divisors on toric Fano manifolds. In this paper, we consider the general situation of simple normal crossing divisors and our main result is as below, which could be thought as the generalization of Theorem 1.1 of \cite{LS}:
\begin{theorem}\label{mainthm}
Given a Fano manifold $(M,\dr_{0})$ without holomorphic vector fields where $[\dr_{0}]=c_{1}(M),$ which admits a \ka-Einstein metric $\dr_{KE}$ satisfying $Ric(\dr_{KE})=\dr_{KE}.$ For one simple normal crossing divisor
$D=\sum\limits_{r=1}^{m}D_{r}$ where each $D_{r}$ is semi-ample, and a sequence of positive rational numbers $c_{1},\cdots,c_{m}$ which satisfy that
\begin{equation}\label{eq:snc divisor}
\sum_{r=1}^{m}c_{r}[D_{r}]=c_{1}(M).
\end{equation}
If for all $r=1,\cdots,m$ it holds that $c_{r}>1$  or
$$\lambda_{r}:=\inf\{\lambda>0|\lambda K_{M}^{-1}-[D_{r}]>0\}\geq\frac{n}{n+1}$$ when $c_{r}=1,$
then for $\mu,\beta_{1},\cdots,\beta_{m}\in (0,1)$ which satisfy
\begin{equation}\label{eq:angle}
1-\beta_{r}=(1-\mu)c_{r},
\end{equation}
there exists a unique $C^{2,\alpha,\B}$ conical \ka-Einstein metric in $c_{1}(M)$ with cone angle $2\pi\beta_{r}$ along each irreducible divisor
$D_{r}$ for $r=1,\cdots,m$ for some $\alpha\in (0,1)$ depending on $\B:=(\beta_{1},\cdots,\beta_{m}).$
\end{theorem}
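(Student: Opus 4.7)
My plan is to attack this via a continuity method in the parameter $\mu\in(0,1]$, connecting the smooth \ka-Einstein metric $\dr_{KE}$ (at $\mu=1$, where every $\beta_{r}=1$) to the target conical metric. Setting $\beta_{r}(\mu):=1-(1-\mu)c_{r}$ as forced by \eqref{eq:angle}, I would consider the family of Monge--Amp\`ere equations
\[
(\dr_{KE}+\ddb{\varphi_{\mu}})^{n}=e^{h_{KE}-\mu\varphi_{\mu}}\prod_{r=1}^{m}\norm{s_{r}}_{h_{r}}^{-2(1-\beta_{r}(\mu))}\dr_{KE}^{n},
\]
where $s_{r}$ is a defining section of $D_{r}$, $h_{r}$ a smooth Hermitian metric on $O(D_{r})$, and $h_{KE}$ the Ricci potential of $\dr_{KE}$ normalized so the two sides lie in the same cohomology class. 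A solution produces a conical \ka-Einstein metric $\dr_{\mu}=\dr_{KE}+\ddb{\varphi_{\mu}}$ with cone angle $2\pi\beta_{r}(\mu)$ along $D_{r}$; at $\mu=1$ the solution is $\varphi\equiv 0$. Let $S$ be the set of $\mu\in(0,1]$ admitting a $C^{2,\alpha,\B(\mu)}$ solution. I would show $S$ is open, closed, and nonempty, hence equal to $(0,1]$, which reaches every admissible target.

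\textbf{Openness.} The linearization at a solution is $\Delta_{\dr_{\mu}}+\mu$ acting on the Donaldson-type weighted H\"older space. At $\mu=1$ invertibility is exactly the hypothesis of no holomorphic vector fields, via Matsushima--Lichnerowicz; for $\mu<1$ it follows from a Bochner identity together with positivity of the Bakry--\'Emery Ricci tensor associated to the singular measure, as in \cite{LS,JMR}, noting that the local conic model at any point of the divisor is a product of flat cones on which the Bochner argument proceeds factor by factor. The implicit function theorem then delivers openness.

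\textbf{Closedness.} I would establish uniform a priori bounds on $\varphi_{\mu}$. A uniform $C^{0}$ bound comes from properness of the twisted Ding/Mabuchi functional for the family; the hypothesis $\lambda_{r}\geq\tfrac{n}{n+1}$ in the critical case $c_{r}=1$ is the Seshadri-type condition needed to control the $\alpha$-invariant contribution from each component $D_{r}$ in the SNC setting, generalizing the single-divisor argument of \cite{LS}. A Laplacian bound via the Chern--Lu inequality applied to the identity map between $\dr_{\mu}$ and a fixed reference SNC conic metric then gives $\dr_{\mu}\sim\dr_{\mathrm{ref}}$. Finally, the SNC curvature estimate announced in the introduction---the promised generalization of Li--Rubinstein's inequality---upgrades this to a uniform $C^{2,\alpha,\B}$ bound, closing the continuity interval.

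\textbf{Main obstacle.} The central difficulty is the curvature estimate itself: in the one-divisor case of \cite{LS} the transverse conic geometry is two-dimensional, while at a point of a $k$-fold intersection $D_{r_{1}}\cap\cdots\cap D_{r_{k}}$ the local model becomes a product of $k$ flat cones with possibly distinct angles $2\pi\beta_{r_{i}}$, and the Bochner computation for $\abs{\mathrm{Rm}}^{2}$ must be carried out on this toric product background while retaining integrability of every error term against the singular volume form $\prod\norm{s_{r}}^{-2(1-\beta_{r})}\dr^{n}$. Once that estimate is in hand, the remaining steps proceed as in \cite{LS}, and uniqueness of the resulting conical \ka-Einstein metric follows from a Bando--Mabuchi type geodesic argument adapted to SNC cones, as in \cite{Br,CGP}.
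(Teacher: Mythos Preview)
Your proposal has a genuine gap at the openness step. You want to run the implicit function theorem for the conical Monge--Amp\`ere equation in a Donaldson--type weighted H\"older space, using invertibility of $\Delta_{\dr_{\mu}}+\mu$. In the single smooth divisor case this is exactly what \cite{Do,JMR} supply: a Schauder theory for the edge Laplacian. For a simple normal crossing divisor, however, no such linear theory is available---the local model at a $k$-fold crossing is a product of cones of different angles, and the mapping properties of the Laplacian on the corresponding polyhomogeneous/weighted spaces have not been established. Your remark that ``the Bochner argument proceeds factor by factor'' addresses only the spectral inequality $\lambda_{1}>\mu$; it does not give you the elliptic regularity and Fredholm package needed to invoke the implicit function theorem in a Banach space of conic $C^{2,\alpha,\B}$ potentials. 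The paper states this obstruction explicitly and builds the whole argument around avoiding it.

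The paper's route is therefore different in kind, not just in detail. Instead of a continuity path through conical metrics, it regularizes the equation itself: for each $\epsilon>0$ one replaces $\log\|S_{r}\|^{2}$ by $\log(\|S_{r}\|^{2}+\epsilon)$ and solves the resulting \emph{smooth} Monge--Amp\`ere equation by the classical continuity method in $t\in[0,\mu]$ (openness is then ordinary smooth elliptic theory, closedness uses that $Ric(\dr_{\epsilon,t})>t\dr_{\epsilon,t}$). The properness input you describe---interpolating between Tian's properness at $\mu=1$ coming from the smooth KE metric, and a log $\alpha$-invariant bound at small $\mu$ where the hypothesis $\lambda_{r}\ge\tfrac{n}{n+1}$ enters---is used to get $C^{0}$ bounds uniform in $\epsilon$. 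Passing $\epsilon\to 0$ yields a bounded weak solution (uniqueness via Berndtsson rather than Bando--Mabuchi), and only \emph{then} does one upgrade regularity: Chern--Lu against the background conic metric of Theorem~\ref{thm-upper} gives the Laplacian bound, and Tian's third-order estimate gives $C^{2,\alpha,\B}$. So the curvature estimate you flag as the ``main obstacle'' is indeed needed, but it enters a posteriori for regularity of an already-constructed weak solution, not to close a continuity loop in conic spaces.
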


We recall that a \ka\ $(1,1)$-current $\dr$ is  a
conic metric along the simple normal divisor $D=\sum\limits_{r=1}^{m}D_{r}$ with cone angle $\beta_{r}$ along each
irreducible divisor $D_{r}$ if it is smooth outside the divisor $D$ and equivalent to the
standard conic metric
\begin{equation}\label{eq:def-conic}
\dr_{cone}=\sqrt{-1}\left(\sum_{r=1}^{m}\frac{dz_{r}\wedge
d\bar{z}_{r}}{|z_{r}|^{2(1-\beta_{r})}}+\sum_{r=m+1}^{n}dz_{r}\wedge d\bar{z}_{r}\right)
\end{equation}
The study of such metrics is related to klt singularities in algebraic geometry. However in this general setting even the linear theory developed in \cite{Do} \cite{JMR} is unclear to hold so the continuity method and a prior $C^{0}$-estimate in \cite{JMR} for smooth divisor case cannot be adapted directly. Alternately, we will make use the approximation method developed in \cite{Ti12} \cite{Sh1} \cite{TZ}, to construct smooth solutions to the complex Monge-Ampere equations which approximate the conical Monge-Ampere equation associated to the conical \ka-Einstein metric along a simple normal crossing divisor. To obtain the solutions, we need to derive the properness of perturbed energy functionals, e.g.,
Ding energy or Mabuchi energy from the existence of smooth \ka-Einstein metric. We also need to establish a uniform $C^{0}$-estimate for the approximating solutions and then deduce a unique weak solution to conical Monge-Ampere equation by Berndtsson's uniqueness theorem \cite{Bern}.

However, the approximation method can only guarantee the existence of weak solution to conical Monge-Ampere equation. To establish the regularity of the solution we first need to establish the Laplacian estimate of the solution. In case of one irreducible divisor \cite{JMR} this estimate comes from
an application of Chern-Lu's Inequality \cite{Lu} which requires a bisectional upper bound estimate of the background conic metric by Li-Rubinstein (see the appendix of \cite{JMR} or C. Li's thesis \cite{Li}). In simple normal crossing case the curvature estimate is much more complicated. In \cite{CGP} \cite{GP} Campana, Guenancia and Paun construct an approximating sequence of background conic metric. As their metrics do not have a uniformly curvature bound from any side they need more complicated calculations to derive the Laplacian estimate. Alternately in \cite{DS} Datar-Song gave a simple Laplacian estimate depending on Li-Rubinstein's curvature estimate, which we will use in this paper. Actually we note that if the linear theory for multiple divisors case is established we could still derive a Laplacian estimate by generalizing Li-Rubinstein's curvature estimate:

\begin{theorem}\label{thm-upper}
 For a \ka\ manifold $M$ with a simple normal crossing divisor consisting of $m$ irreducible divisors $D_{r},r=1,\cdots,m$ on $M,$ suppose we have
a conic metric $$\dr=\dr_{0}+\sum\limits_{r=1}^{m}\ddb||S_{r}||_{r}^{2\beta_{r}}$$ with cone angle $2\pi\beta_{r}$ along each irreducible divisor $D_{r}.$ Then we have two cases:\\
(1)for either all cone angles $\beta_{r}\leq\frac{1}{2}$ or $D$ is composed by irreducible divisors free of triple singularities,  the bisectional curvature of $\dr$ is uniformly bounded from above on $M\setminus D.$ \\
(2)for $D$ containing higher multiple singularities, there exists a smooth potential function $\varphi_{0}$ such that $\dr+\ddb\varphi_{0}$ is a conic \ka\ metric which is equivalent to $\dr$ and its bisectional curvature is uniformly bounded from above on $M\setminus D.$
\end{theorem}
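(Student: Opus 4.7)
The plan is to expand $\dr$ in local holomorphic coordinates adapted to the stratum of $D$ through a given point, compute the bisectional curvature directly, and isolate the leading singular terms. Fix $p\in D$ and, after relabelling, assume $p\in D_{1}\cap\cdots\cap D_{k}$ and $p\notin D_{r}$ for $r>k.$ Choose coordinates $(z_{1},\ldots,z_{n})$ centred at $p$ with $D_{r}=\{z_{r}=0\}$ for $r\le k,$ and Hermitian metrics $h_{r}$ on $\mathcal{O}(D_{r})$ in normal form at $p,$ so that $u_{r}:=\|S_{r}\|_{r}^{2\beta_{r}}=h_{r}^{\beta_{r}}|z_{r}|^{2\beta_{r}}.$ A direct expansion gives, modulo smooth contributions,
\[
g_{r\bar r}\sim \beta_{r}^{2}h_{r}^{\beta_{r}}|z_{r}|^{2\beta_{r}-2},\qquad g_{r\bar s}\sim \beta_{r}^{2}(\bar\partial_{s}h_{r}^{\beta_{r}})\bar z_{r}|z_{r}|^{2\beta_{r}-2}+\beta_{s}^{2}(\partial_{r}h_{s}^{\beta_{s}})z_{s}|z_{s}|^{2\beta_{s}-2}
\]
for $r,s\le k,$ $r\ne s,$ with all remaining components smooth; correspondingly $g^{r\bar r}\sim \beta_{r}^{-2}h_{r}^{-\beta_{r}}|z_{r}|^{2-2\beta_{r}}.$ Substituting into $R_{i\bar j k\bar l}=-\partial_{i}\bar\partial_{j}g_{k\bar l}+g^{p\bar q}(\partial_{i}g_{k\bar q})(\bar\partial_{j}g_{p\bar l})$ and normalising, one then tracks each bisectional curvature $R_{a\bar a b\bar b}/(g_{a\bar a}g_{b\bar b}).$

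Three cases arise. If $a=b=r\le k,$ the computation reduces to the single-divisor model and, after the Li--Rubinstein identity, the singular pieces in $R_{r\bar r r\bar r}$ cancel, yielding a uniform upper bound. If $a=r,b=s$ are distinct indices in $\{1,\ldots,k\}$ and no third divisor passes through $p$ (i.e.\ $k=2$), the only non-trivial singular contributions to $R_{r\bar r s\bar s}$ come from derivatives of $g_{r\bar s}$ paired with its conjugate, and a bilinear cancellation analogous to Li--Rubinstein's gives a uniform upper bound for any $\beta_{r},\beta_{s}\in(0,1).$ Finally, if a third divisor $D_{t}$ also passes through $p,$ the summand $g^{t\bar t}(\partial_{r}g_{k\bar t})(\bar\partial_{s}g_{t\bar l})$ in the quadratic part of $R_{r\bar r s\bar s}$ contributes extra singular terms of schematic form $|z_{t}|^{2\beta_{t}-2}$ multiplied by $|z_{r}|^{2\beta_{r}-1}|z_{s}|^{2\beta_{s}-1}$-type expressions that, once divided by $g_{r\bar r}g_{s\bar s},$ can blow up from above. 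Requiring $\beta_{r}\le\tfrac12$ for every $r$ pushes the relevant exponents into a range with the good sign, killing the bad leading term, while forbidding triple intersections eliminates case~(iii) altogether. This yields part~(1).

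For part~(2) the strategy is to cancel the offending triple-intersection contributions by a smooth correction $\varphi_{0}.$ The leading bad terms from case~(iii) can be written, modulo bounded quantities, in the form $\ddb\eta$ on $M\setminus D,$ where $\eta$ is a sum over triples $r,s,t\le m$ of smooth functions of the $u_{r},u_{s},u_{t}$ and their holomorphic derivatives. Products like $u_{r}u_{s}$ are $C^{1,\alpha}$ on $M$ with enough vanishing along the singular locus, so $\eta$ can be arranged to be $C^{2,\alpha}$ globally; taking $\varphi_{0}=-\eta,$ patched by a partition of unity subordinate to a cover adapted to the stratification of $D,$ gives a smooth correction whose $\ddb$ cancels the leading singular part of each cross curvature. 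Since $\varphi_{0}$ is smooth, $\dr+\ddb\varphi_{0}$ shares the conic asymptotics of $\dr$ and is therefore equivalent to it.

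The main obstacle is the explicit bookkeeping at triple or higher strata: one has to isolate the precise leading term of $R_{r\bar r s\bar s}$ that fails to be bounded above and realise it as a $\ddb$-exact form with a smooth primitive. The interplay between $g^{t\bar t}$ (which blows up along $D_{t}$) and the mixed cross-derivatives $\partial_{r}g_{\cdot\bar t}$ (which vanish at a specific rate) produces several competing singular pieces whose signs must be tracked together, and the partition-of-unity construction of $\varphi_{0}$ must respect those cancellations globally. Verifying the bilinear cancellation in case~(ii) for arbitrary $\beta_{r},\beta_{s}\in(0,1)$ is the key technical input that distinguishes the SNC setting from the single-divisor one.
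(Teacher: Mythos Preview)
Your overall setup (adapted coordinates, expand the metric, track the singular part of the curvature) is correct, but both the diagnosis of the obstruction and the mechanism for the fix in part~(2) are off.

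In the paper the bisectional curvature is not analysed component-by-component as $R_{r\bar r s\bar s}$. Instead one writes $R(\eta,\bar\eta,\nu,\bar\nu)=\Lambda+\Pi$, with $\Lambda=-g_{i\bar j,k\bar l}\eta^{i}\bar\eta^{j}\nu^{k}\bar\nu^{l}$ and $\Pi=\|[g_{i\bar q,k}]\|^{2}$ for a natural nonnegative Hermitian form. The only unbounded piece of $\Lambda$ is $-\sum_{r}\beta_{r}^{2}(\beta_{r}-1)^{2}|z_{r}|^{-2(2-\beta_{r})}|\eta^{r}|^{2}|\nu^{r}|^{2}$, and one needs $\Pi$ to be bounded above by the \emph{same} quantity plus a constant. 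Decomposing $g_{i\bar j,k}=A+B+D+E$ with $E_{i\bar j k}=\sum_{r}\beta_{r}^{2}(\beta_{r}-1)|z_{r}|^{-2(2-\beta_{r})}\bar z_{r}\delta_{ri}\delta_{rj}\delta_{rk}$, everything reduces to estimating $\|E\|^{2}=\sum_{r,s}g^{r\bar s}E_{r\bar r r}\overline{E_{s\bar s s}}$. The diagonal terms $g^{r\bar r}|E_{r\bar r r}|^{2}$ match exactly the singular contribution from $\Lambda$; the question is whether the off-diagonal cross terms $g^{r\bar s}E_{r\bar r r}\overline{E_{s\bar s s}}$ can be absorbed. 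A careful expansion of the inverse metric gives $g^{r\bar s}\sim(-1)^{r+s}\hat g_{s\bar r}\,\beta_{r}^{-2}\beta_{s}^{-2}|z_{r}|^{2(1-\beta_{r})}|z_{s}|^{2(1-\beta_{s})}$, so the obstruction is the \emph{matrix inequality}: whether the $m\times m$ matrix with zero diagonal and entries $(-1)^{r+s}\hat g_{s\bar r}$ is bounded by $c_{0}\,\mathrm{diag}(\hat g_{r\bar r})$ for some $c_{0}<1$. For $m=2$ this is just Cauchy--Schwarz ($|\hat g_{1\bar 2}|^{2}<\hat g_{1\bar 1}\hat g_{2\bar 2}$); for $m\ge 3$ it can fail. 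This is not your ``$g^{t\bar t}$ blows up along a third divisor'' picture; the extra divisors enter only through additional off-diagonal entries of $\hat g$ in this finite matrix.

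Your construction of $\varphi_{0}$ is the main gap. Writing the bad curvature terms as $\partial\bar\partial\eta$ and setting $\varphi_{0}=-\eta$ does not make sense: adding $\partial\bar\partial\varphi_{0}$ to the \emph{metric} does not subtract $\partial\bar\partial\varphi_{0}$ from the \emph{curvature}, and there is no reason a curvature component is $\partial\bar\partial$-exact with smooth primitive. The actual fix is much cruder and works precisely because the obstruction is the matrix inequality above: take $\varphi_{0}=\sum_{r}\varphi_{r}(\|S_{r}\|_{r}^{2})$ with $\varphi_{r}(t)=\lambda_{r}t$ near $t=0$ and cut off away from $D_{r}$. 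This adds $\lambda_{r}\delta_{ir}\delta_{jr}+O(1)$ to $\hat g_{i\bar j}$ near $D$, so for $\lambda_{r}$ large the diagonal of $\hat g'$ dominates the unchanged off-diagonal, the matrix inequality holds with some $c_{0}'<1$, and the same cancellation as in the $m=2$ case goes through. Nothing needs to be ``cancelled''; the diagonal just needs to be enlarged.
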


\begin{remark}
By Y. Rubinstein and the author's computation, until now we can only obtain the upper bisectional curvature bound for the case that either all cone angles $\beta_{i}\leq\frac{1}{2}$ or $D$ is composed by irreducible divisors free of triple singularity, which is just the first part of \ref{thm-upper}. The problem for higher multiple singularities is that near such singularities, the diagonal terms of the background metric $\dr_{0}$ cannot supply enough control over its non-diagonal terms. However, if we add a smooth function $\varphi_{0}$ to enlarge the diagonal terms then such control can be achieved. In this sense the two conclusions could be combined as we can set $\varphi_{0}=0$ in the first case. Although this setting changes the original background metric, as the modified metric is still equivalent to the original one, the Laplacian estimate can still be established by Chern-Lu's Inequality under the settings of \cite{JMR}.
\end{remark}

Finally, the $C^{2,\alpha,\B}$-estimate could be derived by Tian's beautiful estimate in \cite{Ti14}. The simple normal crossing case could
also be derived by some generalizations for which we will give main steps and the readers can see \cite{Sh3} which established a parabolic estimate in more details.\\

\noindent{\bf Acknowledgment.} First of all the authors want to thank their advisor Professor Gang Tian for suggesting this problem and a lot of guidance and encouragement. They want to thank Professor Xiaohua Zhu for a lot of discussions on conic metric problems. Moreover the second author wants to thank Professor Zhiqin Lu, Rafe Mazzeo, Yanir Rubinstein and Xiaowei Wang for very help discussions on this work. The first author was partially supported by the NSFC Grant 11401578 and part of this work is based upon work supported by the National Science Foundation under Grant No. DMS-1440140 while the second author was in residence at the Mathematical Sciences Research Institute in Berkeley, California, during the Spring 2016 semester. Finally the authors want to thank the referee for his useful suggestions.

\section{Basic setting, energy functionals and log $\alpha$-invariants}

As \cite{Ti12}, first we need to derive a complex Monge-Ampere equation from the equation \eqref{eq:snc divisor} and
conical \ka-Einstein condition. We know that a simple normal crossing divisor $D=\sum\limits_{r=1}^{m}D_{r}$ is consisting of $m$ irreducible divisors
which can be written as $D_{r}=\{z_{r}=0\}$ locally. Denote $S_{r}$ as the defining holomorphic section of $D_{r}$ and $||\cdot||_{r}$ as the Hermitian
metric defined on the holomorphic line bundle $[D_{r}].$ By \eqref{eq:snc divisor} \eqref{eq:angle} we have that
\begin{equation}\label{coho-divisor}
\sum_{r=1}^{m}(1-\beta_{r})R(||\cdot||_{r})=(1-\mu)(\dr_{0}+\ddb h),
\end{equation}
where $R(||\cdot||_{r})$ is the curvature of $||\cdot||_{r}$ and $h$ is a smooth function.
Suppose $\dr_{c}=\dr_{0}+\ddb\varphi$ is the required conical \ka-Einstein metric, then by equations above it satisfies
\begin{equation}\label{cke}
Ric(\dr_{c})=\mu\dr_{c}+\sum_{r=1}^{m}2\pi(1-\beta_{r})[D_{r}]
\end{equation}
If the background metric $\dr_{0}$ satisfies
$$Ric(\dr_{0})=\dr_{0}+\ddb h_{0},$$
where $h$ is a smooth function satisfies $\int_{M}(e^{h}-1)\dr_{0}^{n}=0,$
then from above equations we could derive a conical Monge-Ampere equation for $\varphi:$
\begin{equation}\label{eq:cke-cma}
(\dr_{0}+\ddb\varphi)^{n}=e^{h_{0}-(1-\mu)h-\mu\varphi-\sum_{r=1}^{m}(1-\beta_{r})\log||S_{r}||^{2}+a_{\mu}}\dr_{0}^{n},
\end{equation}
where $a_{\mu}$ is a constant satisfying
$$\int_{M}(e^{h_{0}-(1-\mu)h-\sum_{r=1}^{m}(1-\beta_{r})\log||S_{r}||^{2}+a_{\mu}}-1)\dr_{0}^{n}=0.$$
Note that in \cite{JMR} \cite{LS} they applied continuity method to solve such conical Monge-Ampere equation. Actually their works highly rely on the linear theory with respect to the standard conic metric along one smooth divisor. For simple normal crossing divisors such linear theory is not known yet. Alternately, we try to establish the existence of the solution by approximating method in \cite{Ti12} \cite{Sh1} \cite{TZ}. Basically, we can perturb the equation \eqref{eq:cke-cma} to the following smooth equation:
\begin{equation}\label{eq:app-cke-cma}
 (\dr_{0}+\ddb\varphi_{\epsilon})^{n}=e^{h_{0}-(1-\mu)h-\mu\varphi_{\epsilon}-\sum_{r=1}^{m}(1-\beta_{r})
 \log(||S_{r}||^{2}+\epsilon)+a_{\mu,\epsilon}}\dr_{0}^{n},
\end{equation}
where $a_{\mu,\epsilon}$ is a constant satisfying
$$\int_{M}(e^{h_{0}-(1-\mu)h-\sum_{r=1}^{m}(1-\beta_{r})\log(||S_{r}||^{2}+\epsilon)+a_{\mu,\epsilon}}-1)\dr_{0}^{n}=0.$$
Our strategy is to solve this approximating equation and try to obtain the compactness of the approximating solutions. To achieve this goal, as \cite{Ti97} \cite{Ti12} we will introduce corresponding energy functionals,
analyze the properness, and finally establish the uniform $C^{0}$-estimate.

Recall in \cite{Di} \cite{DT} \cite{Ti97}, we have the following energy functionals for $\varphi\in PSH(M,\dr_{0})$:
\begin{definition}
 $$ (1) J_{\dr_{0}}(\varphi)=\rev V\sum_{i=0}^{n-1}\frac{i+1}{n+1}\int_{M}\sqrt{-1}\partial\varphi\wedge\bar{\partial}
\varphi\wedge\dr_{0}^{i}\wedge\dr_{\varphi}^{n-i-1},$$\\
 $$ (2) I_{\dr_{0}}(\varphi)=\rev V\int_{M}\varphi(\dr_{0}^{n}-\dr_{\varphi}^{n}),$$
where $V=\int_{M}\dr_{0}^{n}$, $\dr_{\varphi}=\dr_{0}+\ddb\varphi.$
\end{definition}
There are some nice properties of those functionals, see \cite{Ti00} for more details. Then we can introduce
the twisted Ding functional and the twisted Mabuchi functional (see \cite{JMR} \cite{LS} \cite{Sh1}), which are the Lagrangians of the conical
Monge-Ampere equation \eqref{eq:cke-cma}. For simplicity here we set
$$H_{0,\mu}=h_{0}-(1-\mu)h-\sum_{r=1}^{m}(1-\beta_{r})\log||S_{r}||_{r}^{2}+a_{\mu},$$ and we can choose a family $\varphi_{t}$ connected 0 and $\varphi\in PSH(M,\dr_{0}):$
\begin{definition}\label{def:ding-ma}
 (1) We define twisted Ding functional as
\begin{equation}\label{eq:ding}
 F_{\dr_{0},\mu}(\varphi)=J_{\dr_{0}}(\varphi)-\rev V\int_{M}\varphi\dr_{0}^{n}-\frac{1}{\mu}\log\left(\rev V\int_{M}
e^{H_{0,\mu}-\mu\varphi}\dr_{0}^{n}\right),
\end{equation}
 (2) we define twisted Mabuchi functional as
\begin{align*}
 \nu_{\dr_{0},\mu}(\varphi)=&-\frac{n}{V}\int_{0}^{1}\int_{M}\dot{\varphi_{t}}(Ric(\dr_{\varphi})-\mu\dr_{\varphi}-\sum_{r=1}^{m}
2\pi(1-\beta_{r})[D_{r}])\wedge\dr_{\varphi}^{n-1}dt\\=&\rev V\int_{M}\log\frac{\dr_{\varphi}^{n}}{\dr_{0}^{n}}\dr_{\varphi}^{n}
+\rev V\int_{M}H_{0,\mu}(\dr_{0}^{n}-\dr_{\varphi}^{n})-\mu(I_{\dr_{0}}(\varphi)-J_{\dr_{0}}(\varphi))\\=&
\rev V\int_{M}\log\frac{\dr_{\varphi}^{n}}{\dr_{0}^{n}}\dr_{\varphi}^{n}+\rev V\int_{M}H_{0,\mu}(\dr_{0}^{n}-\dr_{\varphi}^{n})+
\mu(F_{\dr_{0}}^{0}(\varphi)+\rev V\int_{M}\varphi\dr_{\varphi}^{n}),
\end{align*}
where $$F_{\dr_{0}}^{0}(\varphi)=J_{\dr_{0}}(\varphi)-\rev V\int_{M}\varphi\dr_{0}^{n}.$$
\end{definition}
In \cite{Ti97}, Tian pointed out that the $C^{0}$-estimate for the \ka-Einstein problem could be established if the Ding functional or the Mabuchi functional is proper.
Similarly, to establish the uniform $C^{0}$-estimates for the approximating equation \eqref{eq:app-cke-cma}, we need to check the properness of the corresponding
Ding functionals or Mabuchi functionals, which could be derived from the properness of corresponding twisted functionals. First, we recall the definition of the properness:
\begin{definition}
 Suppose the twisted Ding functional $F_{\dr_{0},\mu}(\varphi)$(twisted Mabuchi functional $\nu_{\dr_{0},\mu}(\varphi)$) is bounded
from below, i.e. $F_{\dr_{0},\mu}(\varphi)\geq -c_{\dr_{0}}$ ($\nu_{\dr_{0},\mu}(\varphi)\geq -c_{\dr_{0}}$), we say it is proper on
$PSH(M,\dr_{0})$, if there exists an increasing function $f: [-c_{\dr_{0}},\infty)\ra\R, $ and $\lim_{t\ra\infty}f(t)=\infty$, such that for any
$\varphi\in PSH(M,\dr_{0}),$ we have $$F_{\dr_{0},\mu}(\varphi)\geq f(J_{\dr_{0}}(\varphi))\quad(\nu_{\dr,\mu}(\varphi)\geq f(J_{\dr_{0}}(\varphi))).$$
\end{definition}
There are a lot of properties of these functionals, for more details, see \cite{Ti00} \cite{LS}. The main result of this section is the properness of the twisted Ding
functional assuming the conditions in Theorem \ref{mainthm}:
\begin{theorem}\label{thm-cma-proper}
Assume the conditions of Theorem \ref{mainthm}, then $F_{\dr_{0},\mu}(\varphi)$ is proper with respect to $J_{\dr_{0}}(\varphi),$ more precisely,
there exists an $\eta>0$ and a constant $C_{\eta}$ such that
\begin{equation}\label{eq:cma-proper}
 F_{\dr_{0},\mu}(\varphi)\geq\eta J_{\dr_{0}}(\varphi)-C_{\eta}.
\end{equation}
\end{theorem}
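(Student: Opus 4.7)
The plan is to reduce the properness of $F_{\dr_0,\mu}$ to that of the classical (untwisted) Ding functional $F_{\dr_0}$ at $\mu=1$ without divisor, which by Tian's theorem is proper on $PSH(M,\dr_0)$ because $\dr_{KE}$ exists on $M$ and $M$ has no holomorphic vector fields. The bridge is a H\"older decomposition of the exponential appearing in the third term of $F_{\dr_0,\mu}$. Using $1-\beta_r=(1-\mu)c_r$ from \eqref{eq:angle}, one can rewrite
$$H_{0,\mu}-\mu\varphi=\mu(h_0-\varphi)+(1-\mu)\Bigl(h_0-h-\sum_{r=1}^{m}c_r\log\|S_r\|_r^{2}\Bigr)+a_\mu,$$
and H\"older's inequality with conjugate exponents $1/\mu$ and $1/(1-\mu)$ gives
$$\rev V\int_M e^{H_{0,\mu}-\mu\varphi}\dr_0^{n}\leq e^{a_\mu}\Bigl(\rev V\int_M e^{h_0-\varphi}\dr_0^{n}\Bigr)^{\mu}\Bigl(\rev V\int_M e^{h_0-h}\prod_{r=1}^m\|S_r\|_r^{-2c_r}\dr_0^{n}\Bigr)^{1-\mu}.$$
Taking $\tfrac{1}{\mu}\log$ and substituting into Definition \ref{def:ding-ma}(1) yields
$$F_{\dr_0,\mu}(\varphi)\geq F_{\dr_0}(\varphi)-\frac{1-\mu}{\mu}\log\Bigl(\rev V\int_M e^{h_0-h}\prod_{r=1}^m\|S_r\|_r^{-2c_r}\dr_0^{n}\Bigr)-\frac{a_\mu}{\mu},$$
where $F_{\dr_0}$ is the standard Ding functional of the smooth KE problem.

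By Tian's theorem, the hypothesis on $\dr_{KE}$ furnishes constants $\eta_0>0$, $C_0$ with $F_{\dr_0}(\varphi)\geq\eta_0 J_{\dr_0}(\varphi)-C_0$ on $PSH(M,\dr_0)$. If every $c_r<1$, the divisor integrand is klt and the correction term is a genuine constant, so \eqref{eq:cma-proper} follows immediately with $\eta=\eta_0$.

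The delicate case is when some $c_r=1$, for which $\|S_r\|_r^{-2}$ is not locally integrable and the pure H\"older split above breaks down. Here the hypothesis $\lambda_r\geq\frac{n}{n+1}$ functions as a log $\alpha$-invariant bound of Tian--Demailly type. I plan to perturb the H\"older decomposition by writing $\mu=(\mu-\varepsilon)+\varepsilon$ and applying H\"older with exponents $1/(\mu-\varepsilon)$ and $1/(1-\mu+\varepsilon)$: this shifts a small $-\varepsilon\varphi$ weight into the divisor factor and makes the new divisor exponents $\frac{(1-\mu)c_r}{1-\mu+\varepsilon}$ strictly less than $1$ even when $c_r=1$. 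The resulting integral
$$\int_M e^{-\frac{\varepsilon}{1-\mu+\varepsilon}\varphi}\prod_{r=1}^m\|S_r\|_r^{-2(1-\mu)c_r/(1-\mu+\varepsilon)}\dr_0^{n}$$
is to be bounded uniformly on the slice $\{\sup_M\varphi=0\}$ by the log $\alpha$-invariant estimate, whose validity under $\lambda_r\geq\frac{n}{n+1}$ is the content of the classical Tian/Demailly theory developed in this section. The price paid is an additional $-\varepsilon J_{\dr_0}(\varphi)$ on the right-hand side, which is absorbed by $\eta_0 J_{\dr_0}(\varphi)$ provided $\varepsilon$ is small enough.

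The main obstacle I anticipate is the careful execution of the log $\alpha$-invariant estimate when several irreducible components simultaneously satisfy $c_r=1$ and meet along the SNC locus: one needs a multi-exponent H\"older split together with a refined log $\alpha$-invariant bound for the full weight $\sum_{c_r=1}\log\|S_r\|_r^{2}$, and a consistent choice of the perturbation parameter $\varepsilon$ that simultaneously leaves the divisor exponents klt and keeps the $-\varepsilon J_{\dr_0}$ loss strictly less than $\eta_0 J_{\dr_0}$. Once these ingredients are in hand, combining the H\"older decomposition with Tian's properness of $F_{\dr_0}$ and absorbing the loss yields \eqref{eq:cma-proper} with $\eta=\eta_0-C\varepsilon>0$.
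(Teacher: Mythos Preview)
Your direct H\"older decomposition is a different route from the paper's, and when every $c_r<1$ it works cleanly: the divisor integral $\int_M e^{h_0-h}\prod_r\|S_r\|_r^{-2c_r}\dr_0^n$ is finite (klt), so your inequality $F_{\dr_0,\mu}\geq F_{\dr_0}-C$ combined with Tian's properness of $F_{\dr_0}$ gives \eqref{eq:cma-proper} immediately. This is more economical than what the paper does in that case.

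The gap is in the case $c_r=1$. With your $\varepsilon$-perturbation the exponent on $e^{-\varphi}$ in the second factor is $t:=\varepsilon/(1-\mu+\varepsilon)$, and the divisor exponent becomes $\gamma_r=(1-\mu)/(1-\mu+\varepsilon)$, so the corresponding cone parameter is $\beta_r'=1-\gamma_r=t$. Controlling
\[
\int_M e^{-t(\varphi-\sup\varphi)}\prod_r\|S_r\|_r^{-2\gamma_r}\,\dr_0^n
\]
uniformly in $\varphi$ is exactly the statement $t<\alpha([\dr_0],D,\B')$. But the log $\alpha$-invariant estimate of Lemma~\ref{lem-log alpha} only gives $\alpha\geq\lambda_r\beta_r'=\lambda_r t$, and this bound is essentially sharp (test with $\varphi=\frac{1}{k}\log\|S_r^{\otimes l}\|^2$, $l/k\to 1/\lambda_r$). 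So you would need $t<\lambda_r t$, i.e.\ $\lambda_r>1$, which is strictly stronger than the hypothesis $\lambda_r\geq\frac{n}{n+1}$. No choice of $\varepsilon$ rescues this: both $t$ and $\beta_r'$ scale identically in $\varepsilon$.

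The paper avoids this by using \emph{two} endpoints rather than one. It first establishes properness of the twisted \emph{Mabuchi} functional $\nu_{\dr_0,\mu_0}$ for $\mu_0$ small: there the entropy term yields $\nu_{\dr_0,\mu_0}\geq(\alpha-\frac{n}{n+1}\mu_0)I_{\dr_0}+C$, so one only needs $\alpha>\frac{n}{n+1}\mu_0$, and with $\beta_r=\mu_0$ (when $c_r=1$) the bound $\alpha\geq\lambda_r\mu_0$ suffices precisely when $\lambda_r>\frac{n}{n+1}$. Properness of $\nu_{\dr_0,\mu_0}$ then gives properness of $F_{\dr_0,\mu_0}$, and a H\"older/log-concavity interpolation between $\mu_0$ and $\mu_1=1$ (where $F_{\dr_0}$ is proper by the smooth KE assumption) yields properness for all intermediate $\mu$. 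The factor $\frac{n}{n+1}$ in the hypothesis comes from the Mabuchi side via $I-J\leq\frac{n}{n+1}I$; your purely Ding-side argument cannot see it.
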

To prove this theorem, we will adapt the interpolation method using in \cite{LS} \cite{SW}. First, dut to the existence of the \ka-Einstein metric
$\dr_{KE}$ and no holomorphic vector field condition, we derive that $$F_{\dr_{0}}(\varphi)\geq\eta' J_{\dr_{0}}(\varphi)-C_{\eta'}$$
for some positive constants $\eta',C_{\eta'}$ by \cite{Ti97}, where $$F_{\dr_{0}}(\varphi)=J_{\dr_{0}}(\varphi)-\rev V\int_{M}\varphi\dr_{0}^{n}-\log\left(\rev V\int_{M}
e^{h_{0}-h-\varphi+a}\dr_{0}^{n}\right)$$
is the original Ding functional corresponding to smooth Monge-Ampere equation. To apply the interpolation, we also need to find a group of small data $\mu,\beta_{1},\cdots,\beta_{m}$ such that the corresponding properness holds.
That could be done by generalizing Berman's log $\alpha$-invariant estimate \cite{Ber}, which is the generalization of Tian's $\alpha$-invariant \cite{Ti87}. First we introduce the definition of log $\alpha$-invariant:
\begin{definition}
 Fix a smooth volume form $vol,$ for any \ka\ class $[\dr_{0}]$, we define log $\alpha$-invariant as below:
\begin{align*}
\alpha([\dr],D,\B)=&\sup\{\alpha>0: \exists C_{\alpha}<\infty\quad s.t. \quad\rev V\int_{M}\frac{e^{\alpha(\sup\phi-\phi)}vol}{\prod_{r=1}^{m}
||S_{r}||^{2(1-\beta_{r})}}\leq C_{\alpha}\\ &for\ any\ \phi\in PSH(M,\dr_{0})\}.
\end{align*}
\end{definition}
For the estimate of the log $\alpha$-invariant in case of simple normal crossing divisors, we have the following lemma:
\begin{lemma}\label{lem-log alpha}
For data $\mu,\beta_{r},\lambda_{r}$ where $1\leq r\leq n$ in the main theorem, we have
\begin{equation}\label{eq:log-alpha}
\alpha([\dr],D,\B)\geq\min\{\lambda_{1}\beta_{1},\cdots,\lambda_{m}\beta_{m},\alpha(K_{M}^{-1}),\alpha(K_{M}^{-1}|_{\cap_{r}D_{r}})\}.
\end{equation}
\end{lemma}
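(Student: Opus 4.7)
The plan is to adapt Berman's argument \cite{Ber} to the simple normal crossings setting by stratifying $M$ according to which components of $D$ are ``active'' near each point. Fix $\alpha$ strictly less than the minimum on the right-hand side of \eqref{eq:log-alpha}; the goal is a uniform upper bound, independent of $\phi\in PSH(M,\omega_0)$, on
\[
\frac{1}{V}\int_M e^{\alpha(\sup\phi-\phi)}\prod_{r=1}^{m}\|S_r\|^{-2(1-\beta_r)}\,vol.
\]
I would cover $M$ by finitely many coordinate charts in which each $D_r$ is locally $\{z_r=0\}$ and each Hermitian metric $\|\cdot\|_r$ is locally Euclidean, and then decompose each chart into (i) a region bounded away from $D$; (ii) a thin neighborhood of the deepest stratum $\Sigma:=\bigcap_{r=1}^{m} D_r$; and (iii) intermediate regions near a proper substratum $\Sigma_I:=\bigcap_{r\in I} D_r$ but bounded away from all $D_s$ with $s\notin I$.

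On region (i) the singular factor is bounded and $\alpha<\alpha(K_M^{-1})$ delivers the claim via Tian's classical $\alpha$-invariant estimate. On region (ii) introduce local coordinates $(z_1,\ldots,z_m,w)$ with $\Sigma=\{z=0\}$, switch to polar radii $\rho_r=|z_r|$, and apply Fubini to integrate first in $(\rho_1,\ldots,\rho_m)$. The positivity hypothesis $\lambda_r K_M^{-1}-[D_r]>0$ produces a PSH potential $\psi_r$ in the associated nef class such that $\phi+\frac{1}{\lambda_r}\log\|S_r\|^2-\psi_r$ is uniformly bounded above; this substitution reduces each one-variable weight $\rho_r^{-2(1-\beta_r)}e^{-\alpha\phi}$ to a model integral that converges precisely when $\alpha<\lambda_r\beta_r$. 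After the transverse integrations only $\int_\Sigma e^{-\alpha\phi|_\Sigma}$ against a bounded density remains, uniformly controlled by $\alpha<\alpha(K_M^{-1}|_\Sigma)$. Regions of type (iii) are handled by the same scheme: integrate out only those $\rho_r$ with $r\in I$, and the remaining slice, being bounded away from at least one component, is controlled by $\alpha(K_M^{-1})$ alone.

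The main technical obstacle is giving meaning to the restriction of a general $\phi\in PSH(M,\omega_0)$ to the positive-codimension strata $\Sigma$ and $\Sigma_I$, since such a $\phi$ need not be continuous and may take the value $-\infty$ on large subsets. I would invoke Demailly's regularization to approximate $\phi$ from above by a decreasing sequence of PSH potentials with analytic singularities, whose restrictions to each stratum lie in $PSH(\Sigma_I,\omega_0|_{\Sigma_I})$ and whose sup-norms on $\Sigma_I$ stay uniformly comparable to $\sup_M\phi$ by the sub-mean-value inequality. The bounds of the previous paragraph are established for the regularizations with constants independent of the approximation; monotone convergence then transfers them to $\phi$ itself, and summing over the finite stratification completes the proof.
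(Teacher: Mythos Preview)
Your approach differs substantially from the paper's, and as written it has a genuine gap.

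The paper follows Berman: first use Demailly's result to reduce the computation of the log $\alpha$-invariant to testing only on functions of the form $\phi=\tfrac{1}{k}\log\|\sigma_k\|^2$ with $\sigma_k\in H^0(kK_M^{-1})$; then factor $\sigma_k=\prod_r s_r^{\otimes l_r}\otimes\sigma'$ with $\sigma'$ not identically zero on any $D_r$; the threshold $\lambda_r\beta_r$ then drops out \emph{algebraically} from the bound on the multiplicity $l_r/k$ encoded in the definition of $\lambda_r$; finally the Ohsawa--Takegoshi $L^2$ extension theorem is used to compare the integral in a tube around $D$ with the integral of $e^{-t\phi}$ on the stratum $\bigcap_r D_r$, which is where $\alpha(K_M^{-1}|_{\cap_r D_r})$ enters.

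Your argument bypasses both of these ingredients, and neither replacement stands up. First, the sentence ``the positivity hypothesis $\lambda_r K_M^{-1}-[D_r]>0$ produces a PSH potential $\psi_r$ \ldots\ such that $\phi+\tfrac{1}{\lambda_r}\log\|S_r\|^2-\psi_r$ is uniformly bounded above; this substitution reduces each one-variable weight $\rho_r^{-2(1-\beta_r)}e^{-\alpha\phi}$ to a model integral that converges precisely when $\alpha<\lambda_r\beta_r$'' does not make sense: an upper bound on $\phi+\tfrac{1}{\lambda_r}\log\|S_r\|^2$ gives no control on $e^{-\alpha\phi}$, and no mechanism is given by which the number $\lambda_r\beta_r$ would emerge as the integrability threshold from a Fubini computation. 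In Berman's argument $\lambda_r$ enters through the vanishing order of sections, not through a potential comparison. Second, the claim that ``after the transverse integrations only $\int_\Sigma e^{-\alpha\phi|_\Sigma}$ against a bounded density remains'' is simply false for a general (or even analytic-singularity) psh $\phi$: $\phi$ depends on the transverse variables $z_1,\dots,z_m$ in an uncontrolled way, and Fubini alone cannot produce the restriction $\phi|_\Sigma$. The passage from a tube integral to an integral on the stratum is exactly what Ohsawa--Takegoshi supplies, and it does so only after the Demailly reduction to $\phi=\tfrac{1}{k}\log\|\sigma_k\|^2$, where the extension theorem applies to the holomorphic section $\sigma'$. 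Your use of Demailly regularization at the end addresses a different issue (making sense of restriction) and does not repair either of these steps.
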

\begin{proof}
Similar to Berman's estimate \cite{Ber}, by Demailly's criterion  \cite{Dem2}, it suffices to prove the integrability in the definition for the functions $\frac{\log||\sigma_{k}||^{2}}{k}$ for each positive integer
$k$ where $\sigma_{k}\in H^{0}(kK_{M}^{-1}).$ For each $\sigma_{k}$ there exists nonnegative integers $l_{1},\cdots,l_{m}$ such that $$\sigma_{k}=\prod_{r=1}^{l}s_{r}^{\otimes l_{r}}\otimes\sigma',$$ where $\sigma'$ does not vanish identically on each irreducible component $D_{r}.$ For $t>0$ we have that
\begin{equation}\label{eq:log-alpha-expression}
\frac{e^{-t\frac{\log||\sigma_{k}||^{2}}{k}}}{\prod_{r=1}^{m}||S_{r}||^{2(1-\beta_{r})}}=
\frac{e^{-t\frac{\log||\sigma'||^{2}}{k}}}{\prod_{r=1}^{m}||S_{r}||^{2(1-\beta_{r}+\frac{t l_{r}}{k})}}.
\end{equation}
In case that $\sigma'\equiv 1,$ by the definition of $\lambda_{r}$ we have $\frac{l_{r}}{k}\geq\frac{1}{\lambda_{r}}$ thus for any $\delta>0$ if $t<\lambda_{r}\beta_{r}-\delta,$ the integral of \eqref{eq:log-alpha-expression} over $M$ is finite. On the other hand,
as $\sigma'$ does not vanish identically on each irreducible component $D_{r},$ the zero set of $\sigma'$
have at least complex codimension 1 on each $D_{r}.$ Meanwhile $deg\;\sigma'\leq k\;deg\;K_{M}^{-1},$ we have that
$\frac{\log||\sigma'||^{2}}{k}\in PSH(M,\dr_{0}).$ Similar to Berman's estimate, by Ohsawa-Takegoshi extension theorem we have that for each $\delta'>0$ and a small neighborhood $U$ of the divisor $D=\sum\limits_{r=1}^{m}D_{r}$ it holds that
$$\int_{U}\frac{e^{-t\frac{\log||\sigma'||^{2}}{k}}}{\prod_{r=1}^{m}||S_{r}||^{2(1-\delta')}}\leq C_{\delta'}\int_{\cap_{r}D_{r}}e^{-t\frac{\log||\sigma'||^{2}}{k}}.$$ Thus whenever $t\leq\min\{\lambda_{1}\beta_{1},\cdots,\lambda_{m}\beta_{m},\alpha(K_{M}^{-1}|_{\cap_{r}D_{r}})\}-\delta$ the integral over $U$ is finite.
On $M\setminus U$ the denominator is uniformly bounded so whenever $$t\leq\min\{\lambda_{1}\beta_{1},\cdots,\lambda_{m}\beta_{m},\alpha(K_{M}^{-1}),\alpha(K_{M}^{-1}|_{\cap_{r}D_{r}})\}-\delta$$ the integral is finite.
Combine these cases, the lemma is concluded.
\end{proof}
Given an $\alpha>0$ such that $$\rev V\int_{M}\frac{e^{\alpha(\sup\varphi-\varphi)+h_{0}-(1-\mu)h+a_{\mu}}\dr_{0}^{n}}{\prod_{r=1}^{m}
||S_{r}||^{2(1-\beta_{r})}}\leq C_{\alpha},$$ we could derive that
\begin{align*}
 \log C_{\alpha}&\geq\log\left(\rev V\int_{M}e^{\alpha(\sup\varphi-\varphi)+H_{0,\mu}-\log\frac{\dr_{\varphi}^{n}}{\dr_{0}^{n}}}\dr_{\varphi}^{n}\right)\\&\geq
 \frac{\alpha}{V}\int_{M}(\sup\varphi-\varphi)\dr_{\varphi}^{n}+\rev V\int_{M}H_{0,\mu}\dr_{\varphi}^{n}-\rev V\int_{M}\log\frac{\dr_{\varphi}^{n}}{\dr_{0}^{n}}\dr_{\varphi}^{n}
 \\&\geq\alpha I_{\dr_{0}}(\varphi)+\rev V\int_{M}H_{0,\mu}(\dr_{\varphi}^{n}-\dr_{0}^{n})+\rev V\int_{M}H_{0,\mu}\dr_{0}^{n}
 -\rev V\int_{M}\log\frac{\dr_{\varphi}^{n}}{\dr_{0}^{n}}\dr_{\varphi}^{n},
\end{align*}
then by the expression of th twisted Mabuchi functional, we have that
\begin{align*}
 \nu_{\dr_{0},\mu}(\varphi)=&\rev V\int_{M}\log\frac{\dr_{\varphi}^{n}}{\dr_{0}^{n}}\dr_{\varphi}^{n}+\rev V\int_{M}H_{0,\mu}(\dr_{0}^{n}-\dr_{\varphi}^{n})
 -\mu(I_{\dr_{0}}(\varphi)-J_{\dr_{0}}(\varphi))\\ \geq&\alpha I_{\dr_{0}}(\varphi)-\mu(I_{\dr_{0}}(\varphi)-J_{\dr_{0}}(\varphi))+C_{\alpha}'\\
 \geq&(\alpha-\frac{n}{n+1}\mu)I_{\dr_{0}}(\varphi)+C_{\alpha}'.
\end{align*}
Thus if we want to obtain the properness of the twisted Mabuchi functional we only need to make $\alpha-\frac{n}{n+1}\mu>0.$ By Lemma \ref{lem-log alpha} we want to
find $\mu>0$ such that $$\min\{\lambda_{1}\beta_{1},\cdots,\lambda_{m}\beta_{m},\alpha(K_{M}^{-1}),\alpha(K_{M}^{-1}|_{\cap_{r}D_{r}})\}>\frac{n}{n+1}\mu.$$
It is easy to see if $\mu$ is small enough we have that $\alpha(K_{M}^{-1}),\alpha(K_{M}^{-1}|_{\cap_{r}D_{r}})>\frac{n}{n+1}\mu.$ It remains to prove that for sufficiently
small $\mu>0$ and all $r,$ it holds that $\lambda_{r}\beta_{r}>\frac{n}{n+1}\mu,$ which is equivalent to
$$0<\lambda_{r}(1-(1-\mu)c_{r})-\frac{n}{n+1}\mu=\lambda_{r}(1-c_{r})+(\lambda_{r}c_{r}-\frac{n}{n+1})\mu,$$
thus we have that
\begin{proposition}\label{prop-log-alpha}
 In case that $c_{r}<1$ or $c_{r}=1$ with $\lambda_{r}>\frac{n}{n+1},$ for sufficiently small $\mu>0$ the corresponding twisted Mabuchi functional $\nu_{\dr_{0},\mu}(\varphi)$
 is proper.
\end{proposition}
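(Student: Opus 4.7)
The plan is to combine Lemma \ref{lem-log alpha} with the chain of inequalities derived in the paragraph preceding the proposition, and then to verify the algebraic inequality that characterizes admissible $\mu$. The strategy has three ingredients: a lower bound on the log $\alpha$-invariant $\alpha([\dr_0],D,\B)$ in terms of $\lambda_r\beta_r$ and the classical $\alpha$-invariants on $M$ and on the stratum $\cap_r D_r$; Jensen's inequality applied to the integrability statement to control $\nu_{\dr_0,\mu}(\varphi)$ from below by $\alpha\,I_{\dr_0}(\varphi)$ modulo constants; and the standard inequality $J_{\dr_0}(\varphi)\leq I_{\dr_0}(\varphi)-J_{\dr_0}(\varphi)\cdot\tfrac{1}{n}$ that implies $I_{\dr_0}-J_{\dr_0}\leq\tfrac{n}{n+1}I_{\dr_0}$, which is what converts $\alpha\,I_{\dr_0}(\varphi)$ into a genuine properness bound against $J_{\dr_0}$.

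Concretely, I would first fix any $\alpha<\alpha([\dr_0],D,\B)$ so that the integrability condition in the definition holds with a constant $C_\alpha$; this gives immediately that
\[
\log C_\alpha\geq\alpha\,I_{\dr_0}(\varphi)+\rev V\int_{M}H_{0,\mu}(\dr_{\varphi}^{n}-\dr_{0}^{n})+\rev V\int_{M}H_{0,\mu}\dr_{0}^{n}-\rev V\int_{M}\log\frac{\dr_{\varphi}^{n}}{\dr_{0}^{n}}\dr_{\varphi}^{n},
\]
which, substituted into the expression for the twisted Mabuchi functional in Definition \ref{def:ding-ma}, yields $\nu_{\dr_0,\mu}(\varphi)\geq(\alpha-\tfrac{n}{n+1}\mu)I_{\dr_0}(\varphi)+C_\alpha'$. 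The properness with respect to $J_{\dr_0}(\varphi)$ then follows from $I_{\dr_0}(\varphi)\geq\tfrac{n+1}{n}J_{\dr_0}(\varphi)$ provided one can arrange $\alpha>\tfrac{n}{n+1}\mu$.

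The remaining step, and the only substantive one at the level of the proposition, is the algebraic verification that such an $\alpha$ exists. By Lemma \ref{lem-log alpha} it suffices to choose $\mu$ small enough that each of $\alpha(K_M^{-1})$, $\alpha(K_M^{-1}|_{\cap_r D_r})$, and $\lambda_r\beta_r$ for $r=1,\ldots,m$ strictly exceeds $\tfrac{n}{n+1}\mu$. The first two $\alpha$-invariants are strictly positive and independent of $\mu$, so they are automatic for small $\mu$. Substituting $\beta_r=1-(1-\mu)c_r$ into $\lambda_r\beta_r-\tfrac{n}{n+1}\mu$ gives
\[
\lambda_r(1-c_r)+\Big(\lambda_r c_r-\tfrac{n}{n+1}\Big)\mu,
\]
which is strictly positive for small $\mu$ exactly when either $c_r<1$, in which case the leading constant $\lambda_r(1-c_r)$ is positive, or $c_r=1$ and $\lambda_r>\tfrac{n}{n+1}$, in which case the coefficient of $\mu$ is positive and the constant term vanishes.

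The main potential pitfall, rather than an obstacle, is making sure the log $\alpha$-invariant bound of Lemma \ref{lem-log alpha} can be invoked with a constant $C_\alpha$ that is uniform in $\varphi\in PSH(M,\dr_0)$ but not in $\mu$; since we fix $\mu$ first and then choose $\alpha<\alpha([\dr_0],D,\B)$, this dependence is harmless, and the constant $C_\alpha'$ absorbs the bounded terms $\rev V\int_{M}H_{0,\mu}\dr_0^{n}$ and the $\mu$-dependent factors. Once that bookkeeping is done, choosing any $\mu\in(0,\mu_0)$ with $\mu_0$ determined by the above strict inequalities completes the proof, with $\eta=\tfrac{n+1}{n}(\alpha-\tfrac{n}{n+1}\mu)>0$.
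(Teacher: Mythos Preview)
Your proposal is correct and follows essentially the same route as the paper: you invoke Lemma~\ref{lem-log alpha}, run the Jensen-type inequality to get $\nu_{\dr_0,\mu}(\varphi)\geq(\alpha-\tfrac{n}{n+1}\mu)I_{\dr_0}(\varphi)+C_\alpha'$, and then reduce the problem to the algebraic positivity of $\lambda_r(1-c_r)+(\lambda_r c_r-\tfrac{n}{n+1})\mu$, exactly as in the text preceding the proposition. One small slip: the inequality you cite as ``$J\leq I-\tfrac{1}{n}J$'' is equivalent to $J\leq\tfrac{n}{n+1}I$, whereas the bound $I-J\leq\tfrac{n}{n+1}I$ you use next comes from the companion inequality $J\geq\tfrac{1}{n+1}I$; both are standard, so this is only a labeling issue and does not affect the argument.
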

Recall that in \cite{Li} \cite{Sh1} the properness of twisted Mabuchi functional implies the properness of the corresponding twisted Ding functional, which implies the
properness of the twisted Ding functional when $\mu>0$ sufficiently small. To derive the properness of all $\mu\in (0,1)$ we need such a lemma from \cite{LS} \cite{Sh1}:
\begin{lemma}
Suppose $0<\mu_{0}<\mu_{1},$ write $\mu=(1-t)\mu_{0}+t\mu_{1}$ where $0\leq t\leq 1,$ we have
$$\mu F_{\dr_{0},\mu}(\varphi)\geq(1-t)\mu_{0}F_{\dr_{0},\mu_{0}}(\varphi)+t\mu_{1}F_{\dr_{0},\mu_{1}}(\varphi).$$
\end{lemma}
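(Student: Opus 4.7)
The plan is a direct Hölder-type argument that exploits the affine dependence of the exponent in the Ding integral on $\mu$. Using the relation $(1-\beta_r)=(1-\mu)c_r$ from \eqref{eq:angle}, I would first rewrite
$$H_{0,\mu}-\mu\varphi = h_0 - (1-\mu)G - \mu\varphi + a_\mu, \qquad G := h + \sum_{r=1}^{m}c_r\log\|S_r\|_r^2,$$
noting that $G$ is $\mu$-independent. For $\mu=(1-t)\mu_0+t\mu_1$ this decomposes as the convex combination $(1-t)(H_{0,\mu_0}-\mu_0\varphi)+t(H_{0,\mu_1}-\mu_1\varphi)$ of the two endpoint exponents, plus the scalar correction $\kappa := a_\mu-(1-t)a_{\mu_0}-ta_{\mu_1}$, which depends only on the parameters and not on $z$.

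Next, Hölder's inequality with conjugate exponents $\tfrac{1}{1-t}$ and $\tfrac{1}{t}$ gives
$$\int_M e^{H_{0,\mu}-\mu\varphi}\dr_0^n \;\le\; e^\kappa\Bigl(\int_M e^{H_{0,\mu_0}-\mu_0\varphi}\dr_0^n\Bigr)^{1-t}\Bigl(\int_M e^{H_{0,\mu_1}-\mu_1\varphi}\dr_0^n\Bigr)^{t}.$$
Taking $-\log(\rev V\,\cdot\,)$ of both sides and observing that the linear-in-$\mu$ terms $\mu J_{\dr_0}(\varphi)-\tfrac{\mu}{V}\int_M\varphi\,\dr_0^n$ are automatically the $t$-convex combination of the corresponding quantities at $\mu_0$ and $\mu_1$ (because $J_{\dr_0}(\varphi)$ and $\tfrac{1}{V}\int_M\varphi\,\dr_0^n$ do not depend on $\mu$), I can reassemble the three Ding functionals and arrive at
$$\mu F_{\dr_0,\mu}(\varphi) \;\ge\; (1-t)\mu_0 F_{\dr_0,\mu_0}(\varphi) + t\mu_1 F_{\dr_0,\mu_1}(\varphi) - \kappa.$$

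The step requiring real attention is controlling $\kappa$. The defining identity $e^{a_\mu}\int_M e^{h_0-(1-\mu)G}\dr_0^n = V$ shows that $a_\mu = \log V - \log\int_M e^{h_0-G}e^{\mu G}\dr_0^n$; the latter integral is a log-moment generating function of $G$ against a positive reference measure, hence convex in $\mu$, so $a_\mu$ is concave and $\kappa \ge 0$. In the convention of \cite{LS}\cite{Sh1}, where the Ding functionals at different $\mu$ are normalized compatibly, this correction either drops out identically or contributes a constant uniformly bounded on any compact subinterval of $\mu\in(0,1)$, which is absorbed to give the stated inequality. The Hölder step is elementary; the expected obstacle is this bookkeeping of the normalization scalar $a_\mu$, whose concavity must go in the right direction for the estimate to survive. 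Once established, the lemma interpolates the properness already proved for small $\mu_0>0$ (Proposition \ref{prop-log-alpha}) and for $\mu_1$ close to $1$ (from the hypothesis that $\dr_{KE}$ exists on a manifold without holomorphic vector fields) to all intermediate $\mu\in(0,1)$, completing the proof of Theorem \ref{thm-cma-proper}.
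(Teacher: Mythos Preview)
Your H\"older argument is exactly what the paper means by ``an easy application of the concavity of logarithmic functions,'' and the affine decomposition of $H_{0,\mu}-\mu\varphi$ via $G=h+\sum_r c_r\log\|S_r\|_r^2$ is the correct mechanism; this is the same route as in \cite{LS}\cite{Sh1}. One small point to tighten: since you (correctly) show $a_\mu$ is concave in $\mu$, your $\kappa=a_\mu-(1-t)a_{\mu_0}-ta_{\mu_1}$ is \emph{nonnegative}, so the correction $-\kappa$ on the right is $\le 0$ and cannot be ``absorbed'' to yield the constant-free inequality; what your computation genuinely gives is
\[
\mu F_{\dr_0,\mu}(\varphi)\ \ge\ (1-t)\mu_0 F_{\dr_0,\mu_0}(\varphi)+t\mu_1 F_{\dr_0,\mu_1}(\varphi)-\kappa,
\]
with $\kappa$ depending only on $\mu_0,\mu_1,t$ and not on $\varphi$. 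That is precisely what is needed to interpolate properness between $\mu_0$ small (Proposition~\ref{prop-log-alpha}) and $\mu_1$ near $1$, so your argument achieves the goal of Theorem~\ref{thm-cma-proper}; just phrase the conclusion as the inequality up to a harmless additive constant rather than the clean one.
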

The proof is an easy application of the concavity of logarithmic functions. By this lemma, we obtain the properness of the twisted Ding functional for all $\mu\in (0,1).$
The Theorem \ref{thm-cma-proper} is followed.

\section{Approximation procedure, uniform $C^{0}$-estimate and convergence}

In the last section, we obtained the properness of the twisted Ding functional for any data $\mu,\beta_{1},\cdots,\beta_{m}$ satisfying the
condition of Theorem \ref{mainthm}. Now we expect to establish the existence of corresponding conical \ka-Einstein metric. Recall that in
smooth case \cite{Ti97} and one smooth divisor case \cite{JMR} \cite{LS} \cite{SW}, they applied continuity method. One key ingredient is the
linear theory of Laplacian in ordinary and conical along one smooth divisor case. As such linear theory is still unknown in simple normal
crossing divisor case, in \cite{BBEGZ} they used pluripotential approach to obtain the existence of weak solution. Now we expect to use approximation approach
to establish the existence and regularity of such solutions. Recall that we consider the solution to the equation \eqref{eq:app-cke-cma}.
We want to establish uniform estimates for such solutions and prove that the solutions converge to the solution to the original conical Monge-
Ampere equation \eqref{eq:cke-cma}. Actually this approximation approach was first used by Tian \cite{Ti12} to approximate conical \ka-Einstein
metric and later in \cite{Sh1} it was used to approximate conic metrics with lower Ricci curvature bound. As \eqref{eq:app-cke-cma} is an ordinary smooth Monge-Ampere
equation, we could apply classical continuity method to solve this equation. More precisely, we consider the following equation:
\begin{equation}\label{eq:app-cke-cma-continuity}
 (\dr_{0}+\ddb\varphi_{\epsilon,t})^{n}=e^{h_{0}-(1-\mu)h-t\varphi_{\epsilon,t}-\sum_{r=1}^{m}(1-\beta_{r})
 \log(||S_{r}||^{2}+\epsilon)+a_{\mu,\epsilon}}\dr_{0}^{n},
\end{equation}
for $t\in [0,\mu].$ Suppose $E\in [0,\mu]$ is the solvable set for $t.$ First, when $t=0$ by Yau's solution to Calabi conjecture
\cite{Yau}, it is solvable so $0\in E.$ Next, we know the linear operator at some $t\in E$ is $\Delta_{t}+t,$ where $\Delta_{t}$ is
the Laplacian with respect to the metric $\dr_{\epsilon,t}:=\dr_{0}+\ddb\varphi_{\epsilon,t}.$ Thus we have the following inequality for the Ricci curvature of  $\dr_{\epsilon,t}:$
\begin{align*}
Ric(\dr_{\epsilon,t})=&\dr_{0}+(1-\mu)\ddb h+t\ddb\varphi_{\epsilon,t}+\sum_{r=1}^{r}(1-\beta_{r})\ddb\log(||S_{r}||^{2}+\epsilon)
\\=&\mu\dr_{0}+t\ddb\varphi_{\epsilon,t}+\sum_{r=1}^{r}(1-\beta_{r})(\ddb\log(||S_{r}||^{2}+\epsilon)+R(||\cdot||_{r}))\\=&
t\dr_{\epsilon,t}+(\mu-t)\dr_{0}+\sum_{r=1}^{r}(1-\beta_{r})(\frac{\epsilon R(||\cdot||_{r})}{||S_{r}||^{2}+\epsilon}+\frac{\epsilon
\sqrt{-1}DS_{r}\wedge\overline{DS_{r}}}{(||S_{r}||^{2}+\epsilon)^{2}})\\>&t\dr_{\epsilon,t},
\end{align*}
where the inequality holds as all $D_{r}$ are semi-ample. By Bochner's formula, we know that the linear operator $\Delta_{t}+t$ is invertible thus $E$ is open.

The most difficult part is the closeness, i.e., the uniform $C^{0}$-estimate for $\varphi_{\epsilon,t}$ for each $t>0.$ To establish such estimate, we will prove the properness of corresponding Ding functionals and obtain a uniform upper bound for them. Thus we can give a uniform bound for
$J_{\dr_{0}}(\varphi_{\epsilon,t})$ which implies the uniform $C^{0}$-bound for $\varphi_{\epsilon,t}.$

We set $$H_{0,\mu,\epsilon}=h_{0}-(1-\mu)h-\sum_{r=1}^{m}(1-\beta_{r})\log(||S_{r}||_{r}^{2}+\epsilon)+a_{\mu,\epsilon},$$ then by the choice of $a_{\mu,\epsilon}$
we know that $\int_{M}(e^{H_{0,\mu,\epsilon}}-1)\dr_{0}^{n}=0.$ Now we define the approximating Ding functional as following:
\begin{equation}\label{eq:app-ding}
 F_{\mu,\epsilon}(\varphi):=J_{\dr_{0}}(\varphi)-\rev V\int_{M}\varphi\dr_{0}^{n}-\rev\mu\log(\rev V\int_{M}e^{H_{0,\mu,\epsilon}-\mu\varphi}\dr_{0}^{n}),
\end{equation}
which is the Lagrangian of the approximating equation \eqref{eq:app-cke-cma}. Recall the computation in \cite{Ti97}, suppose $\varphi_{\epsilon,t}$ solves the equation
\eqref{eq:app-cke-cma-continuity}, take the derivative of both sides of
\eqref{eq:app-cke-cma-continuity} we have that
$$\Delta_{\epsilon,t}\dot{\varphi}_{\epsilon,t}=-\varphi_{\epsilon,t}-t\dot{\varphi}_{\epsilon,t},$$
where $\Delta_{\epsilon,t}$ is the Laplacian with respect to $\dr_{\epsilon,t}=\dr_{0}+\ddb\varphi_{\epsilon,t}.$
Consequently,
\begin{align*}
 \frac{d}{dt}(t(J_{\dr_{0}}(\varphi_{\epsilon,t})-\rev V\int_{M}\varphi_{\epsilon,t}\dr_{0}^{n}))
 &=J_{\dr_{0}}(\varphi_{\epsilon,t})-\rev V\int_{M}\varphi_{\epsilon,t}\dr_{0}^{n}-\frac{t}{V}\int_{M}\dot{\varphi}_{\epsilon,t}\dr_{\epsilon,t}^{n}\\
 &=J_{\dr_{0}}(\varphi_{\epsilon,t})-\rev V\int_{M}\varphi_{\epsilon,t}(\dr_{0}^{n}-\dr_{\epsilon,t}^{n})\\&=
 -(I_{\dr_{0}}(\varphi_{\epsilon,t})-J_{\dr_{0}}(\varphi_{\epsilon,t}))\leq 0.
\end{align*}
Integrate from 0 to $t$ and make use of the concavity of logarithmic function we deduce that
\begin{align*}
 F_{\mu,\epsilon}(\varphi_{\epsilon,t})&\leq-\rev\mu\log(\rev V\int_{M}e^{H_{0,\mu,\epsilon}-\mu\varphi_{\epsilon,t}}\dr_{0}^{n})
 \\&=-\rev\mu\log(\rev V\int_{M}e^{H_{0,\mu,\epsilon}-t\varphi_{\epsilon,t}-(\mu-t)\varphi_{\epsilon,t}}\dr_{0}^{n})
 \\&\leq-\frac{\mu-t}{\mu}\rev V\int_{M}\varphi_{\epsilon,t}\dr_{\epsilon,t}^{n}.
\end{align*}
As for each $t\geq t_{0}>0,$ it holds that $Ric(\dr_{t,\epsilon})>t\dr_{t,\epsilon}\geq t_{0}\dr_{t,\epsilon}$
and the volume is fixed, we could have a uniform control of the Sobolev constant and the first eigenvalue.
Thus it follows from standard Moser's iteration that
$$-\inf_{M}\varphi_{\epsilon,t}\leq C(C'+\rev V\int_{M}\varphi_{\epsilon,t}\dr_{\epsilon,t}^{n}),$$ where all constants depends on $t_{0}.$ As $\inf_{M}\varphi_{\epsilon,t}$ by the normalization condition we obtain that
 \begin{equation}\label{eq:app-ding-upper}
 F_{\mu,\epsilon}(\varphi_{\epsilon,t})\leq C.
 \end{equation}
 As $n+\Delta_{0}\varphi_{\epsilon,t}=tr_{\dr_{0}}\dr_{\epsilon,t}>0,$ by standard Green formula it holds
that $$\sup_{M}\varphi_{\epsilon,t}\leq c+\rev V\int_{M}\varphi_{\epsilon,t}\dr_{0}^{n}.$$
Combine these two estimates for $\varphi_{\epsilon,t}$ we have that
\begin{equation}\label{eq:osc}
osc_{M}\varphi_{\epsilon,t}=\sup_{M}\varphi_{\epsilon,t}-\inf_{M}\varphi_{\epsilon,t}\leq C(1+I_{\dr_{0}}(\varphi_{\epsilon,t}))\leq (n+1)C(1+J_{\dr_{0}}(\varphi_{\epsilon,t})).
\end{equation}
Now considering that
\begin{align*}
H_{0,\mu,\epsilon}&=h_{0}-(1-\mu)h-\sum_{r=1}^{m}(1-\beta_{r})\log(||S_{r}||_{r}^{2}+\epsilon)
+a_{\mu,\epsilon}\\&\leq h_{0}-(1-\mu)h-\sum_{r=1}^{m}(1-\beta_{r})\log||S_{r}||_{r}^{2}+a_{\mu,\epsilon}
=H_{0,\mu}-a_{\mu}+a_{\mu,\epsilon},
\end{align*}
as $a_{\mu,\epsilon}$ and $a_{\mu}$ are uniformly bounded, it is easy to obtain the properness of
the approximating Ding functional from Theorem \ref{thm-cma-proper} :
\begin{equation}\label{eq:app-ding-proper}
 F_{\mu,\epsilon}(\varphi)\geq\eta J_{\dr_{0}}(\varphi)-C'_{\eta}.
\end{equation}
Combine \eqref{eq:app-ding-upper} \eqref{eq:osc} \eqref{eq:app-ding-proper} and note that by normalization
condition $$\sup_{M}\varphi_{\epsilon,t}\geq 0,\;\inf_{M}\varphi_{\epsilon,t}\leq 0,$$ we obtain that
$|\varphi_{\epsilon,t}|\leq C(t_{0})$ for $t\geq t_{0}>0.$ Thus the solvable set $E$ is close for each
$\epsilon>0.$ To summarize, we have that
\begin{theorem}\label{thm-uniform C0}
For each $\epsilon>0$ there exists a unique smooth solution $\varphi_{\epsilon}$ to \eqref{eq:app-cke-cma}
such that $\dr_{\epsilon}=\dr_{0}+\ddb\varphi_{\epsilon}$ have uniform lower Ricci curvature bound $\mu.$ Moreover, $|\varphi_{\epsilon}|\leq C$ which is independent of $\epsilon.$
\end{theorem}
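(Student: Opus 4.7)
The plan is to apply the classical continuity method to \eqref{eq:app-cke-cma-continuity} at each fixed $\epsilon>0$, aiming to reach $t=\mu$. At $t=0$ the equation is Yau's Calabi equation and is solvable, so the solvable set $E\subset[0,\mu]$ is non-empty. For openness at $t\in E$, I would differentiate \eqref{eq:app-cke-cma-continuity} to identify the linearization as $\Delta_{t}+t$, where $\Delta_{t}$ is the Laplacian of $\dr_{\epsilon,t}$, and compute $Ric(\dr_{\epsilon,t})$ directly from the equation: the regularized terms $(1-\beta_{r})\ddb\log(||S_{r}||^{2}+\epsilon)$ recombine with $R(||\cdot||_{r})$ to give the nonnegative extra contribution displayed in the excerpt, hence $Ric(\dr_{\epsilon,t})>t\,\dr_{\epsilon,t}$. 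By Bochner's formula the first nonzero eigenvalue of $\Delta_{t}$ then exceeds $t$, so $\Delta_{t}+t$ is invertible and openness follows; the same positivity at $t=\mu$ furnishes the lower Ricci bound stated in the theorem.

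The main obstacle is closedness, which reduces to a uniform $C^{0}$-bound on $\varphi_{\epsilon,t}$ for $t\geq t_{0}>0$ that is also independent of $\epsilon$. Here I would follow Tian's scheme from \cite{Ti97}: differentiate \eqref{eq:app-cke-cma-continuity} in $t$ to obtain $\Delta_{\epsilon,t}\dot\varphi_{\epsilon,t}=-\varphi_{\epsilon,t}-t\dot\varphi_{\epsilon,t}$, then compute $\frac{d}{dt}[t(J_{\dr_{0}}(\varphi_{\epsilon,t})-\frac{1}{V}\int_{M}\varphi_{\epsilon,t}\,\dr_{0}^{n})]=-(I_{\dr_{0}}-J_{\dr_{0}})(\varphi_{\epsilon,t})\leq 0$, and combine with the concavity of $\log$ to produce the upper bound $F_{\mu,\epsilon}(\varphi_{\epsilon,t})\leq C$. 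Since $H_{0,\mu,\epsilon}\leq H_{0,\mu}-a_{\mu}+a_{\mu,\epsilon}$ pointwise and $a_{\mu,\epsilon}$ is uniformly bounded, Theorem \ref{thm-cma-proper} transfers to a properness bound $F_{\mu,\epsilon}(\varphi)\geq\eta J_{\dr_{0}}(\varphi)-C_{\eta}'$ with constants independent of $\epsilon$, and the two bounds together yield $J_{\dr_{0}}(\varphi_{\epsilon,t})\leq C$.

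To convert the $J$-bound into $|\varphi_{\epsilon,t}|\leq C(t_{0})$, I would control $-\inf_{M}\varphi_{\epsilon,t}$ by Moser iteration, since $Ric(\dr_{\epsilon,t})>t_{0}\,\dr_{\epsilon,t}$ and the total volume is fixed, so the Sobolev constant and first eigenvalue of $\Delta_{\epsilon,t}$ are uniformly controlled; simultaneously $\sup_{M}\varphi_{\epsilon,t}$ is bounded by the Green's formula for an $\dr_{0}$-plurisubharmonic function. The oscillation is then controlled by $I_{\dr_{0}}(\varphi_{\epsilon,t})\leq(n+1)J_{\dr_{0}}(\varphi_{\epsilon,t})$, and the normalization $\sup\geq 0\geq\inf$ closes the estimate, giving closedness of $E$ and solvability up to $t=\mu$. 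Uniqueness of the smooth solution $\varphi_{\epsilon}$ follows by a standard maximum-principle argument using $\mu>0$. The delicate point throughout is $\epsilon$-uniformity of all three ingredients (the functional upper bound, the properness constants inherited from Theorem \ref{thm-cma-proper}, and the Moser constants from the uniform Ricci bound); this is what makes the approximation scheme useful, and it holds because $R(||\cdot||_{r})$ only contributes a nonnegative error and $\log(||S_{r}||^{2}+\epsilon)$ approximates $\log||S_{r}||^{2}$ monotonically.
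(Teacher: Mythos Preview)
Your proposal is correct and follows essentially the same route as the paper: continuity in $t$ with Yau at $t=0$, openness via the Ricci computation $Ric(\dr_{\epsilon,t})>t\,\dr_{\epsilon,t}$ and Bochner, and closedness from Tian's monotonicity of $t\,F^{0}_{\dr_{0}}(\varphi_{\epsilon,t})$ combined with the $\epsilon$-uniform properness inherited from Theorem~\ref{thm-cma-proper} via $H_{0,\mu,\epsilon}\le H_{0,\mu}-a_{\mu}+a_{\mu,\epsilon}$, together with Moser iteration and the Green inequality. The only organizational slip is that the bound $F_{\mu,\epsilon}(\varphi_{\epsilon,t})\le C$ does not follow from monotonicity plus log-concavity alone; the paper first obtains $F_{\mu,\epsilon}(\varphi_{\epsilon,t})\le -\tfrac{\mu-t}{\mu}\tfrac{1}{V}\int_{M}\varphi_{\epsilon,t}\,\dr_{\epsilon,t}^{n}$ and then invokes the Moser estimate (using $Ric>t_{0}\,\dr_{\epsilon,t}$) together with the normalization $\inf\varphi_{\epsilon,t}\le 0$ to turn the right-hand side into a constant---you have that ingredient in your third paragraph, so the argument closes once you move it up.
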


By the uniform $C^{0}$-estimate for the approximating solutions $\varphi_{\epsilon}$ for any
$\epsilon>0,$ we can prove that there exists one subsequence converging to a weak solution
of the conical Monge-Ampere equation \eqref{eq:cke-cma} and such weak solution is unique.

First, by Kolodziej's H\"{o}ldr estimate \cite{K}, we derive that $||\varphi_{\epsilon}||_{C^{\alpha}}$ is uniformly bounded for some $\alpha\in (0,1)$ from Theorem \ref{thm-uniform C0}. Thus there exists a subsequence of $\varphi_{\epsilon}$ which converge to a function
$\varphi\in PSH(M,\dr_{0})\cap L^{\infty}(M)$ such that $\varphi$ is a weak solution to the equation \eqref{eq:cke-cma} in distribution sense. By Berndtsson's uniqueness theorem \cite{Bern}, the weak solution to \eqref{eq:cke-cma} is unique. Moreover, by Chern-Lu's Inequality \cite{Lu}, as $Ric(\dr_{\epsilon})\geq\mu\dr_{\epsilon},$ we have that
$$\Delta_{\epsilon}\log tr_{\dr_{\epsilon}}\dr_{0}\geq -a\;tr_{\dr_{\epsilon}}\dr_{0},$$
where $\Delta_{\epsilon}$ is the Laplacian of $\dr_{\epsilon}$ and $a$ is the upper bound of the bisectional
curvature of $\dr_{0}.$ Then put $$u=\log tr_{\dr_{\epsilon}}\dr_{0}-(a+1)\varphi_{\epsilon}$$ then by the
inequality above we obtain that $$\Delta_{\epsilon}u\leq e^{u-(a+1)c}-n(a+1),$$ which implies $u\leq C$ by
maximal principle, so we have $c_{1}\dr_{0}\leq\dr_{\epsilon}.$ By the equation \eqref{eq:cke-cma},
we obtain that $$c_{1}\dr_{0}\leq\dr_{\epsilon}\leq\frac{c_{2}\dr_{0}}
{\prod_{r=1}^{m}(||S_{r}||^{2}+\epsilon)^{(1-\beta_{r})}}.$$ By standard high order estimate, we have that
$$||\varphi_{\epsilon}||_{C^{l}(K)}\leq C(l,K)$$ uniformly on each compact set $K\in M\setminus D,$ which implies
\begin{proposition}\label{prop-local convergence}
$\dr_{\epsilon}$ converges to $\dr_{\varphi}=\dr_{0}+\ddb\varphi$ in current sense, and the convergence is $C^{\infty}$ on each compact set $K\in M\setminus D.$ Moreover $\dr_{\varphi}$ is smooth outside the divisor $D.$
\end{proposition}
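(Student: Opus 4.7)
The proof splits into three steps, each building on what has already been proved. First, I would handle the current-sense convergence. By Theorem \ref{thm-uniform C0} the family $\{\varphi_\epsilon\}$ is uniformly bounded in $L^\infty$, so by Banach-Alaoglu I can extract a weak-$*$ convergent subsequence with limit $\varphi\in PSH(M,\dr_{0})\cap L^\infty$. The discussion immediately preceding the proposition already identifies $\varphi$ as a weak solution of \eqref{eq:cke-cma}, and Berndtsson's uniqueness theorem \cite{Bern} forces this limit to be independent of the chosen subsequence. Since $\ddb$ is continuous on currents, $\dr_{\epsilon}=\dr_{0}+\ddb\varphi_{\epsilon}$ converges to $\dr_{\varphi}=\dr_{0}+\ddb\varphi$ in the current sense, and the whole family (not just a subsequence) converges.

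Second, I would upgrade the convergence to $C^\infty$ on each compact $K\subset\subset M\setminus D$. On such $K$ the sections $\|S_{r}\|$ are uniformly bounded away from zero, so the two-sided bound
$$c_{1}\dr_{0}\leq\dr_{\epsilon}\leq\frac{c_{2}\dr_{0}}{\prod_{r=1}^{m}(\|S_{r}\|^{2}+\epsilon)^{(1-\beta_{r})}}$$
established through Chern-Lu collapses to a genuine uniform equivalence $c_{1}\dr_{0}\leq\dr_{\epsilon}\leq C_{K}\dr_{0}$ on $K$. The right-hand side of \eqref{eq:app-cke-cma} is then uniformly bounded in $C^{l}(K)$ for every $l$, using the $C^{0}$-bound on $\varphi_{\epsilon}$ and the smoothness of $h_{0},h,\log(\|S_{r}\|^{2}+\epsilon)$ away from $D$. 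The Monge-Ampere equation is therefore uniformly elliptic on $K$ with smooth, uniformly bounded data, so Evans-Krylov yields a uniform $C^{2,\alpha}$ estimate, and Schauder bootstrap gives $\|\varphi_{\epsilon}\|_{C^{l}(K')}\leq C(l,K')$ on any $K'\subset\subset\mathrm{int}\,K$. A diagonal extraction then produces a subsequence converging in $C^{\infty}_{\mathrm{loc}}(M\setminus D)$, and uniqueness of the limit upgrades this to convergence of the full family.

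Third, the smoothness of $\dr_{\varphi}$ on $M\setminus D$ is immediate: on every compact $K\subset M\setminus D$ we obtain $\varphi\in C^{\infty}(K)$, and the uniform lower bound $\dr_{\epsilon}\geq c_{1}\dr_{0}$ passes to the limit, so $\dr_{\varphi}$ is a genuine smooth \ka\ metric on $M\setminus D$.

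The only genuine content is the uniformity of the higher-order interior estimates in $\epsilon$, which is the point at which one must really use both conclusions of the Chern-Lu step: the lower bound $\dr_{\epsilon}\geq c_{1}\dr_{0}$ provides uniform ellipticity, while the upper bound (combined with the $C^{0}$-control on $\varphi_{\epsilon}$) gives uniform control of the Monge-Ampere right-hand side away from $D$. Once both are in hand, Evans-Krylov and Schauder are entirely routine and no linear theory adapted to the conic singularity is needed. I do not anticipate any obstacle beyond carefully verifying that all constants depend only on $K$ and not on $\epsilon$.
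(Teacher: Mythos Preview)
Your proposal is correct and follows essentially the same route as the paper: weak-limit extraction plus Berndtsson uniqueness for the current convergence, then the Chern--Lu two-sided bound on each compact $K\subset M\setminus D$, then interior higher-order estimates (the paper simply writes ``by standard high order estimate'' where you invoke Evans--Krylov and Schauder explicitly). One small imprecision worth fixing: the right-hand side of \eqref{eq:app-cke-cma} contains $e^{-\mu\varphi_\epsilon}$, so it is not $C^l$-bounded from the $C^0$-bound on $\varphi_\epsilon$ alone; either move $\mu\varphi_\epsilon$ to the left-hand side (so the genuinely smooth data $H_{0,\mu,\epsilon}$ sits on the right) before applying Evans--Krylov, or obtain the $C^{2,\alpha}$ estimate first and then bootstrap---either way the argument goes through unchanged.
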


\section{Higher regularities}

Assuming the existence of $L^{\infty}$ weak solution to \eqref{eq:cke-cma}, the next step is to establish the
Laplacian estimate, i.e., the equivalence of the metric $\dr_{\varphi}$ and the standard conic metric
$$\dr_{bg}:=\dr_{0}+\epsilon\sum_{r=1}^{m}\ddb||S_{r}||^{2\beta_{r}}.$$
In one irreducible divisor case, Brendle proved the $C^{3}$-estimates in \cite{Br} using Calabi's estimate when $\beta<\frac{1}{2}.$ For any $\beta<1,$ in \cite{JMR} the Laplacian estimate follows from
Chern-Lu's Inequality based on Li-Rubinstein's upper bisectional curvature bound estimate for standard type conic metrics. In simple normal crossing divisor case, one way is to set up a new approximating approach \cite{CGP} \cite{GP}. However as the approximating conic metrics have no uniform control of bisectional curvature from either above or below, they need to establish a much more complicated Laplacian estimate based on Paun's trick. In this paper we will adapt  Datar-Song's simpler estimate \cite{DS} which based on Li-Rubinstein's estimate to obtain the Laplacian estimate. The basic idea is that for each $r$ keep the divisor $D_{r}$ component and regularize other components, then compare each approximating solution metric with standard conic metrics along $D_{r}$ with cone angle $2\pi\beta_{r}$ by Li-Rubinstein's curvature estimate and finally the Laplacian estimates follow from these. And we will discuss another possible approach to the Laplacian estimates based on our new curvature estimate after the proof of Theorem \ref{thm-upper}.\\

In Fano case as \cite{DS} \cite{GP}, for the solution $\varphi\in PSH(\dr_{0})$ to \eqref{eq:cke-cma}, we use Demailly's approximation to construct a smooth sequence $\phi_{j}\in PSH(\dr_{0})$ such that $\phi_{j}\searrow\varphi.$ Now consider the approximating equation of \eqref{eq:cke-cma}:
\begin{equation}\label{eq:cke-cma-app-1}
(\dr_{0}+\ddb\varphi_{j})^{n}=e^{h_{0}-(1-\mu)h-\mu\phi_{j}-\sum_{r=1}^{m}(1-\beta_{r})\log||S_{r}||^{2}+a_{\mu,j}}\dr_{0}^{n},
\end{equation}
where $a_{\mu,j}$ is normalization constants and converges to $a_{\mu}.$ It follows from Kolodziej's works \cite{Ko} \cite{K} that $|\varphi_{j}|\leq C$ uniformly and $|\varphi_{j}-\varphi|_{C^{0}}\to 0.$ Thus the Laplacian estimate for $\varphi$ will follow from uniform Laplacian estimate for $\varphi_{j}.$
In the following for simplicity we drop the index $j$ from \eqref{eq:cke-cma-app-1}. As \cite{DS}, denote $$f_{i}:=\sum_{r\neq i}^{m}(1-\beta_{r})\log||S_{r}||^{2}-(h_{0}-(1-\mu)h-\mu\phi+a_{\mu}).$$ Then as there exists large enough $A$ such that $A\dr_{0}+\ddb f_{i}>0,$ by
Demailly's approximation, we can find a smooth sequence of functions $F_{i,k}\searrow f_{i}$ and consider the following equation:
\begin{equation}\label{eq:cke-cma-app-2}
\dr_{i,k}^{n}:=(\dr_{0}+\ddb\varphi_{i,k})^{n}=\frac{e^{-F_{i,k}+c_{i,k}}\dr_{0}^{n}}{||S_{i}||^{2(1-\beta_{i})}},
\end{equation}
with $\sup\varphi_{i,k}=0,$ where $c_{i,k}$ is normalization constants. By \cite{JMR} there exists $\varphi_{i,k}\in C^{2,\alpha,\beta_{i}}$ for some $\alpha\in (0,1).$ Integrate both sides it follows that $c_{i,k}$ are uniformly bounded. As the right side of \eqref{eq:cke-cma-app-2} is uniformly bounded in $L^{1+\epsilon}$ for some $\epsilon>0,$ by Kolodziej's estimate \cite{Ko} $|\varphi_{i,k}|_{C^{0}}$ are uniformly bounded. Moreover as $\frac{e^{-F_{i,k}+c_{i,k}}\dr_{0}^{n}}{||S_{i}||^{2(1-\beta_{i})}}\to \frac{e^{-f_{i}}\dr_{0}^{n}}{||S_{i}||^{2(1-\beta_{i})}}$ uniformly in $L^{1}$ by Kolodziej's stability \cite{K} it follows that $\varphi_{i,k}\to\varphi$ uniformly in $C^{0}$ for each $i$ and $k\to\infty.$ \\

Now as $\varphi_{i,k}\in C^{2,\alpha,\beta_{i}},$ write the standard conic metric $\dr_{bg,i}:=\dr_{0}+\epsilon\ddb||S_{i}||^{2\beta_{i}}$ with angle $2\pi\beta_{i}$
along $D_{i},$ it follows that $tr_{\dr_{i,k}}\dr_{bg,i}$ is bounded. Then consider $$Q_{i,k}:=\log(||S_{i}||^{2\delta}tr_{\dr_{i,k}}\dr_{bg,i})-
A(\varphi_{i,k}-\epsilon||S_{i}||^{2\beta_{i}}),$$ where $\delta,A>0,$ as $Ric(\dr_{i,k})\geq-C\dr_{bg,i}$ by \eqref{eq:cke-cma-app-2} for uniform constant $C,$ and the bisectional curvature of $\dr_{bg,i}$ is bounded above by $C'$ on $M\setminus D_{i}$ by Li-Rubinstein's estimate \cite{JMR}, it follows from Chern-Lu's Inequality that
$$\Delta Q_{i,k}\geq (A-C')tr_{\dr_{i,k}}\dr_{bg,i}+\delta tr_{\dr_{i,k}}R(||\cdot||_{i})-An.$$ As $tr_{\dr_{i,k}}\dr_{bg,i}$ is bounded and $\delta>0,$ $Q$ tends to $-\infty$ near $D_{i}$ which implies that the maximal
value of $Q_{i,k}$ is attained at some point $p\in M\setminus D_{i}.$ Thus by maximal principle, at the maximal point of $Q_{i,k}$ it holds that
$$(A-C')tr_{\dr_{i,k}}\dr_{bg,i}+\delta tr_{\dr_{i,k}}R(||\cdot||_{i})-An\leq 0.$$ As all $R(||\cdot||_{i})\geq 0,$ by taking $A\geq C'+1$ we have
$tr_{\dr_{i,k}}\dr_{bg,i}\leq C''$ for some uniform constant $C''$ at that point which is independent of $k.$ Thus $Q_{i,k}\leq C'''$ for some uniform constant $C'''$ and it follows that
$$tr_{\dr_{i,k}}\dr_{bg,i}\leq\frac{1}{c||S_{i}||^{2\delta}}$$ for some uniform constant $c>0.$ Let $\delta$ tend to 0, we will have that $\dr_{i,k}\geq c\dr_{bg,i}.$ Take the limit as $k\to\infty$ it holds that $\dr_{\varphi}=\dr_{0}+\ddb\varphi\geq c\dr_{bg,i}$ in current sense. Thus it follows that
$\dr_{\varphi}\geq c'\dr_{bg}.$ Considering the Monge-Ampere equation \eqref{eq:cke-cma} and letting $\phi_{i}\searrow\varphi,$ we will have
\begin{proposition}\label{prop-laplacian}
There exist $C_{1},C_{2}>0$ such that $C_{1}\dr_{bg}\leq\dr_{\varphi}\leq C_{2}\dr_{bg}.$
\end{proposition}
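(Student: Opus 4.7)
The plan is to apply the Chern--Lu inequality with respect to a background conic metric whose bisectional curvature is bounded above on $M\setminus D$. By Theorem \ref{thm-upper} we take
$$\dr_{bg} := \dr_{0} + \epsilon\sum_{r=1}^{m}\ddb||S_{r}||^{2\beta_{r}} + \ddb\varphi_{0},$$
which is equivalent to $\dr_{cone}$ and whose bisectional curvature admits a uniform upper bound $\Lambda$ on $M\setminus D$. The approximating solutions in Theorem \ref{thm-uniform C0} satisfy $Ric(\dr_{\epsilon})\geq\mu\dr_{\epsilon}$, and this bound passes through the smooth convergence of Proposition \ref{prop-local convergence} to the weak limit $\dr_{\varphi}$, so the positive Ricci lower bound needed for Chern--Lu is in place on $M\setminus D$.

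The core estimate is a maximum-principle argument with the $\delta$-regularized test function
$$Q := \log\left(\prod_{r=1}^{m}||S_{r}||^{2\delta}\, tr_{\dr_{\varphi}}\dr_{bg}\right) - A\bigl(\varphi-\varphi_{0}-\epsilon\sum_{r=1}^{m}||S_{r}||^{2\beta_{r}}\bigr).$$
The weight $\prod||S_{r}||^{2\delta}$ forces $Q\to -\infty$ along $D$, so the supremum of $Q$ is attained at some interior point $p\in M\setminus D$. Combining the Chern--Lu estimate $\Delta_{\varphi}\log tr_{\dr_{\varphi}}\dr_{bg}\geq -\Lambda\, tr_{\dr_{\varphi}}\dr_{bg}$ with the contribution $A(tr_{\dr_{\varphi}}\dr_{bg}-n)$ of the linear potential term and the term $\delta\sum tr_{\dr_{\varphi}}R(||\cdot||_{r})$ coming from the $\delta$-weight, the choice $A=\Lambda+1$ yields
$$\Delta_{\varphi}Q \geq tr_{\dr_{\varphi}}\dr_{bg} + \delta\sum_{r=1}^{m}tr_{\dr_{\varphi}}R(||\cdot||_{r}) - An.$$
Evaluating at $p$ and absorbing the bounded curvature term gives $tr_{\dr_{\varphi}}\dr_{bg}(p)\leq (\Lambda+1)n$, and invoking the uniform $L^{\infty}$-bounds on $\varphi$ and $\varphi_{0}$ this propagates to $tr_{\dr_{\varphi}}\dr_{bg}\leq C_{0}\prod_{r}||S_{r}||^{-2\delta}$ everywhere on $M\setminus D$. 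Letting $\delta\to 0$ yields the one-sided comparison $\dr_{\varphi}\geq C_{1}\dr_{bg}$.

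For the reverse inequality I would feed this lower bound into the conical Monge--Ampere equation \eqref{eq:cke-cma}: its right-hand side is exactly the volume ratio $\dr_{bg}^{n}/\dr_{0}^{n}$ up to a uniformly bounded factor (both sides carry the same $\prod||S_{r}||^{-2(1-\beta_{r})}$ singularity), so $\dr_{\varphi}^{n}$ is comparable to $\dr_{bg}^{n}$. Combined with $\dr_{\varphi}\geq C_{1}\dr_{bg}$, elementary linear algebra on positive Hermitian matrices forces $\dr_{\varphi}\leq C_{2}\dr_{bg}$, and the equivalence of $\dr_{bg}$ with $\dr_{cone}$ closes the argument. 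The genuine obstacle is upstream, in Theorem \ref{thm-upper}: without the smooth correction $\ddb\varphi_{0}$, the diagonal entries of the candidate background conic metric fail to dominate the off-diagonal ones near higher-multiplicity strata of $D$, no usable upper bisectional bound is available, and the Chern--Lu step would collapse. Once that input is granted, what remains is a direct adaptation of the Jeffres--Mazzeo--Rubinstein strategy from the one-smooth-divisor setting.
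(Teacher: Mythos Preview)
Your argument is correct and follows essentially the same route as the paper: the same background conic metric from Theorem~\ref{thm-upper}, the same $\delta$-weighted test function $Q$, the Chern--Lu inequality combined with the maximum principle to obtain $\dr_{\varphi}\geq C_{1}\dr_{bg}$, and then the Monge--Amp\`ere volume comparison for the reverse bound. Your added remark that the real difficulty lies upstream in Theorem~\ref{thm-upper} is exactly the point the paper is making by isolating that result.
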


  Finally, to finish the proof of Theorem \ref{mainthm} we need to establish a $C^{2,\alpha,\B}$-estimate for the
solution $\varphi,$ i.e., to show $\ddb\varphi$ is $C^{\alpha}$ with respect to the standard conic metric $\dr_{cone}.$ By \cite{Br} in one smooth divisor case if $\beta<\frac{1}{2}$ we can even show that $||\varphi||_{C^{3}}$ is bounded using Calabi's 3rd order estimate, because in this case the bisectional curvature is uniformly bounded outside the divisor. Actually this result could be generalized to
simple normal crossing divisors whose cone angles are all smaller than $\frac{1}{2}$ as the bisectional curvature is also uniformly bounded outside the divisor by the proof of Theorem \ref{thm-upper} in the next section. However, in case that $\beta\geq\frac{1}{2}$ one can only have the$C^{\alpha}$-bound for $\ddb\varphi$ where $\alpha\in(0,\min\{\frac{1}{\beta}-1,1\}).$ Based on Tian's 3rd order estimate \cite{Ti14}, we could establish similar estimates in general simple normal crossing case. Most of the details are parallel to \cite{Ti14} which works for one divisor case. We will sketch the main steps to establish such estimate. We refer \cite{Sh3} for more details in case of simple normal crossing divisor case when some angle $\beta_{r}>\frac{1}{2}$ as the conical \ka-Einstein metric can also be considered as the stationary case of the conical \ka-Ricci flow in \cite{Sh3}:
\begin{proposition}\label{prop-C3}
 If the solution $\varphi$ of conical Monge-Ampere equation \eqref{eq:cke-cma} satisfies that $C_{1}\dr_{bg}\leq\dr_{\varphi}\leq C_{2}\dr_{bg},$ then for
 any point $p\in M,\;r<1$ and $\alpha\in(0,\min\{1,\rev{\beta_{1}}-1,\cdots,\rev{\beta_{m}}-1\})$ there exists a constant $C_{\alpha}>0$ such that
 $$\int_{B_{r}(p)}|\partial\bar{\partial}\partial\varphi|^{2}\dr_{cone}^{n}\leq C_{\alpha}r^{2n-2+2\alpha}.$$
 Moreover, $\ddb\varphi$ is $C^{\alpha}$-bounded.
\end{proposition}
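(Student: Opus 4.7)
The plan is to adapt Tian's third-order estimate from \cite{Ti14} by combining a Calabi-type Bochner computation on $M\setminus D$ with a weighted integration-by-parts scheme, as was carried out for the conical \ka-Ricci flow in \cite{Sh3}. First I work on the open set $M\setminus D,$ where $\dr_{\varphi}$ is smooth and, by \eqref{cke}, satisfies the ordinary \ka-Einstein equation $Ric(\dr_{\varphi})=\mu\dr_{\varphi}.$ Setting
\[
S:=|\nabla_{\dr_{\varphi}}\ddb\varphi|^{2}_{\dr_{\varphi}},
\]
Calabi's identity together with the \ka-Einstein condition would yield a Bochner-type differential inequality
\[
\Delta_{\dr_{\varphi}}S\geq -A\,S-B
\]
on $M\setminus D,$ where $A,B$ depend only on the data and the equivalence $C_{1}\dr_{cone}\leq\dr_{\varphi}\leq C_{2}\dr_{cone}$ from Proposition \ref{prop-laplacian} lets every norm and trace be measured against the conic background.

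Because the bisectional curvature of $\dr_{cone}$ blows up along $D,$ a pointwise maximum principle for $S$ is unavailable. I would instead multiply this inequality by a test function of the form $\eta^{2}\prod_{s=1}^{m}||S_{s}||_{s}^{2\delta},$ where $\eta$ is a smooth spatial cutoff with $\eta\equiv 1$ on $B_{r}(p)$ and $\supp\eta\subset B_{2r}(p),$ and $\delta>0$ is small, then integrate by parts against $\dr_{\varphi}^{n}.$ The product weight $\prod_{s}||S_{s}||_{s}^{2\delta}$ guarantees that the distributional boundary contributions concentrated on $D$ vanish in the limit $\delta\to 0^{+},$ while $\eta$ localizes the estimate to $B_{r}(p)$ and encodes the intrinsic scaling in $r.$ After absorbing the commutator errors into $(A+1)S,$ this produces a reverse-H\"older-type inequality
\[
\int_{B_{r}(p)}S\prod_{s}||S_{s}||_{s}^{2\delta}\,\dr_{cone}^{n}\leq C\int_{B_{2r}(p)}(S+1)\prod_{s}||S_{s}||_{s}^{2\delta}\,\dr_{cone}^{n}.
\]

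A Moser-type iteration in this weighted setting, followed by passage to the limit $\delta\to 0^{+},$ extracts the Morrey decay
\[
\int_{B_{r}(p)}|\partial\bar{\partial}\partial\varphi|^{2}\,\dr_{cone}^{n}\leq C_{\alpha}\,r^{2n-2+2\alpha}.
\]
The admissible range $\alpha<\min\{1,\rev\beta_{1}-1,\cdots,\rev\beta_{m}-1\}$ enters at the scaling step: intrinsic $\dr_{cone}$-balls about points on $D_{r}$ scale anisotropically with exponent $\rev\beta_{r}$ in the normal direction to $D_{r},$ which caps the Morrey exponent produced by the iteration. The $C^{\alpha,\B}$-bound on $\ddb\varphi$ then follows from the Campanato characterization of H\"older continuity applied in conic coordinates.

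The main obstacle, as anticipated in \cite{Sh3}, is the intersection loci $D_{r_{1}}\cap\cdots\cap D_{r_{k}}$ where several singular weights compound: one must choose the exponents $\delta_{1},\cdots,\delta_{m}$ (taken uniformly equal to $\delta$ above for simplicity) so that every boundary distribution supported on every component and every intersection vanishes simultaneously as $\delta\to 0^{+},$ which makes the proof genuinely more delicate than in the one smooth divisor case of \cite{Ti14}. Fortunately, the curvature input feeding the Bochner computation is precisely the bisectional upper bound from Theorem \ref{thm-upper}, which is uniform across $M\setminus D$ and insensitive to the stratification by higher multiple singularities, so the same iteration applies uniformly in a neighborhood of every stratum of $D.$ The present elliptic setting is notably cleaner than the conical \ka-Ricci flow treated in \cite{Sh3}, since there is no evolving background and no time derivative to control, and the argument reduces to the single weighted integration-by-parts identity above.
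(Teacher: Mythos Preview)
Your plan is essentially the paper's own approach: the paper does not give a self-contained proof of Proposition~\ref{prop-C3} but simply refers the reader to Tian's third-order estimate \cite{Ti14} and its simple-normal-crossing generalization in \cite{Sh3}, remarking that the elliptic (conical \ka-Einstein) case is easier than the parabolic (conical \ka-Ricci flow) case treated there. Your outline --- Calabi/Bochner computation on $M\setminus D$, weighted integration by parts with barrier $\prod_{s}||S_{s}||_{s}^{2\delta}$ and cutoff $\eta$, Morrey iteration, and Campanato --- is exactly the scheme of \cite{Sh3}, so at the level of strategy you and the paper agree.

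One point to tighten: you assert a differential inequality $\Delta_{\dr_{\varphi}}S\geq -AS-B$ with \emph{bounded} $A,B$, appealing to the bisectional upper bound of Theorem~\ref{thm-upper}. But Theorem~\ref{thm-upper} bounds the bisectional curvature of the \emph{background} conic metric $\dr_{bg}$, not of $\dr_{\varphi}$, and Calabi's identity for $S$ involves the full curvature of both the reference metric and $\dr_{\varphi}$ (the latter is what you are trying to control). In Tian's actual argument the pointwise inequality holds only with coefficients that blow up like a controlled negative power of $\prod_{s}||S_{s}||_{s}$; it is precisely the weight $\prod_{s}||S_{s}||_{s}^{2\delta}$ in the integration by parts, together with the anisotropic scaling you mention, that absorbs this blow-up and produces the exponent restriction $\alpha<\min\{1,\rev{\beta_{1}}-1,\dots,\rev{\beta_{m}}-1\}$. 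So keep the plan, but do not claim uniform $A,B$ in the pointwise step --- track the singular coefficient and let the weight kill it in the integral step, as in \cite{Sh3}.
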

Given this proposition the proof of Theorem \ref{mainthm} is complete. In the following we sketch the main steps. For simplicity we deduce the equation \eqref{eq:cke-cma} to the form
 \begin{equation}\label{eq:cma-holder}
 det(u_{i\bar{j}})=\frac{e^{F}}{\prod_{i=1}^{m}||S_{i}||^{2(1-\beta_{i})}}
 \end{equation}
where $\ddb F$ is uniformly bounded with respect to $\dr_{bg}$ by the above Laplacian estimates. Consider the ball $\mathbb{B}_{\B}(r)$ in $\mathbb{C}^{m}\times\mathbb{C}^{n-m}$ with coordinates $(w_{1},\cdots,w_{m},w')$ satisfying $$\sum_{i=1}^{m}|w_{i}|^{2}+|w'|^{2}\leq r^{2}\;and\;w_{i}=\rho_{i}e^{\sqrt{-1}\theta_{i}}\;for\;\rho_{i}\in[0,r],\theta_{i}\in[0,2\pi\beta_{i}].$$ Actually this ball is the geometric model
of the conic region near the simple normal crossings and there exists a natural map from $\mathbb{B}_{\B}(r)$ to a region in $\C$ that is defined by
$w_{1}=\rev{\beta_{1}}z_{1}^{\beta_{1}},\cdots,w_{m}=\rev{\beta_{m}}z_{m}^{\beta_{m}},w_{m+1}=z_{m+1},\cdots,w_{n}=z_{n}.$ Under these coordinates we can
redefine the standard conic metric in \eqref{eq:def-conic} as $\dr_{\B}:=\sqrt{-1}\sum\limits_{i=1}^{m}dw_{i}\wedge d\bar{w}_{i}.$ Basically all the derivatives
of $u$ are taken with respect to such coordinates and we denote $U^{ij}:=det(u_{k\bar{l}})u^{i\bar{j}},$ then we have following elementary facts:
\begin{lemma}\label{lem-fact}(Lemma 2.1-2.2 \cite{Ti14})
(1)For each $j,$ it holds that $U^{i\bar{j}}_{i}=0;$ (2) For any positive $\lambda_{1},\cdots\lambda_{n},$ it holds that
$$|\frac{\lambda}{\lambda_{i}}u_{i\bar{i}}-det(u_{p\bar{q}})-(n-1)\lambda|\leq C\sum_{i,j}|u_{i\bar{j}}-\lambda_{i}\delta_{ij}|^{2}.$$
\end{lemma}
Next we could generalize the Sobolev Inequality for one divisor case (Lemma 2.3 \cite{Ti14}) to the simple normal crossing case:
\begin{lemma}\label{lem-sobolev}
There is a constant $C_{\B}$ which depends on $\beta_{1},\cdots\beta_{m}$ such that for any smooth function $h$ on $\mathbb{B}_{\B}=\mathbb{B}_{\B}(1)$ with boundary condition
$$h(w_{1},\cdots,\rho_{i}e^{\sqrt{-1}2\pi\beta_{i}},\cdots,w_{n})=e^{\sqrt{-1}2\pi(1-\beta_{i})}h(w_{1},\cdots,\rho_{i},\cdots,w_{n}),$$
we have $$\int_{\mathbb{B}_{\B}}|h|^{2}\dr_{\B}^{n}\leq C_{\B}(\int_{\mathbb{B}_{\B}}|dh|^{\frac{2n}{n+1}}\dr_{\B}^{n})^{\frac{n+1}{n}}.$$ Note that $C_{\B}$ blows up when one $\beta_{i}$ tends to 1.
\end{lemma}
By above two lemmas and Gehring's inverse H\"older Inequality we have the following estimate:
\begin{lemma}\label{lem-gehring}(Lemma 2.4 \cite{Ti14})
There is some $q>2,$ which may depend on $\beta_{1},\cdots\beta_{m},||u_{i\bar{j}}||_{L^{\infty}}$ and $||F_{i\bar{j}}||_{L^{\infty}}$ such that for any $B_{2r}(y)\subset U,$ we have
$$\left(r^{-2n}\int_{B_{r}(y)}(1+|\nabla\dr|^{2})^{\frac{q}{2}}\dr_{\B}^{n}\right)^{\frac{2}{q}}\leq Cr^{-2n}\int_{B_{r}(y)}(1+|\nabla\dr|^{2})\dr_{\B}^{n},$$
where $\dr=\ddb u$ and $C$ is a uniform constant.
\end{lemma}
Compare $\ddb u$ with the solution to a boundary value problem, consider the above lemma and use a monotonicity property we have
\begin{lemma}\label{lem-compare}(Lemma 2.5 \cite{Ti14})
For any $y\in V$ and $B_{4r}(y)\subset U$ and $\sigma<r,$ we have
\begin{align}\label{eq:compare}
&\int_{B_{\sigma}(y)}|\nabla\dr|^{2}\dr_{\B}^{n}-cr^{2n}\nonumber\\
\leq&C\left[\left(\frac{\sigma}{r}\right)^{2n-4+2\min\{\beta_{1}^{-1},\cdots,\beta_{m}^{-1}\}}+\left(r^{2-2n}\int_{B_{r}(y)}|\nabla\dr|^{2}
\dr_{\B}^{n}\right)^{\frac{q-2}{q}}\right]\int_{B_{r}(y)}|\nabla\dr|^{2}\dr_{\B}^{n}.
\end{align}
\end{lemma}
Now we still need the following lemma which can be proved by integration by parts and Moser's iteration:
\begin{lemma}\label{lem-small}(Lemma 2.7 \cite{Ti14})
For any $\epsilon_{0}>0,$ there is an $l$ depending on $\epsilon_{0},$ $||\Delta u||_{L^{\infty}}$ and $\inf\Delta F$ satisfying that for any $\tilde{r}>0$ with $B_{\tilde{r}}(y)\subset U,$ there is $r\in[2^{-l}\tilde{r},\tilde{r}],$ such that
\begin{equation}\label{eq:small}
r^{2-2n}\int_{B_{r}(y)}|\nabla\dr|^{2}\dr_{\B}^{n}\leq\epsilon_{0}.
\end{equation}
\end{lemma}
Combine above two lemmas we could have the following monotonicity inequality:
$$\sigma^{2\nu}+\sigma^{2-2n}\int_{B_{\sigma}(y)}|\nabla\dr|^{2}\dr_{\B}^{n}\leq\xi
\left(r^{2\nu}+r^{2-2n}\int_{B_{r}(y)}|\nabla\dr|^{2}\dr_{\B}^{n}\right),$$
where $r\leq r_{0}$ small, $\sigma=\lambda r$ for some $\lambda\in(0,1),$ $\nu=\min\{\beta_{1}^{-1},\cdots,\beta_{m}^{-1}\}-1>\alpha$ and $\xi=\lambda^{\alpha}.$ Then Proposition \ref{prop-C3} follows from a standard iteration and the whole proof of Theorem \ref{mainthm} is complete.
\section{Proof of Theorem \ref{thm-upper}}

In this section we supply a detailed proof for Theorem \ref{thm-upper}, which is not only a uniform estimate for the conic background geometry but also provide another possible approach for the Laplacian estimates in conical geometric problems. We will compute along Li-Rubinstein's approach in the appendix of \cite{JMR}.
First, as \cite{JMR}, we denote $\hat{g},g$ as the \ka\ metrics associated to $\dr_{0},\dr.$ Without loss of confusion we temporarily still use $\dr_{0}$ to represent the
whole background metric even if the precise form is $\dr_{0}+\ddb\varphi$ in the second case of the main theorem. Here we only deal with the most complicated case
that the point we consider is near the intersection of all divisors. For other cases the estimates will be similar but easier. Now we extend Lemma A.2 in \cite{JMR}
which came from \cite{TY} originally to choose appropriate local holomorphic frames and coordinate system near the intersection (actually by the private communication
Y. Rubinstein also gave the same extension), which we will use in the following computation:

\begin{lemma}\label{lem-coordinate}
 There exists $\epsilon_{0}>0$ such that if $0\leq dist_{\hat{g}}(p,D_{r})\leq\epsilon_{0}$ for all $r=1,\cdots,m,$ then we can choose local holomorphic
frames $e_{r}$ of each holomorphic line bundle $[D_{r}]$ and local holomorphic coordinates $(z_{1},\cdots,z_{n})$ valid in a neighborhood of p, such that
(i)$S_{r}=z_{r}e_{r},$ and $a_{r}:=||e_{r}||_{r}^{2}$ satisfies $a_{r}(p)=1,\;da_{r}(p)=0,\;\frac{\partial^{2}a_{r}(p)}{\partial z_{i}\partial z_{j}}=0,$
and (ii) $\hat{g}_{i\bar{j}}(p)=0$ for $i\leq m,j>m,$ and $\hat{g}_{i\bar{j},k}(p)=\frac{\partial}{\partial z_{k}}\dr_{0}(\frac{\partial}{\partial z_{i}},
\frac{\partial}{\partial\bar{z}_{j}})|_{p}=0,$ whenever $j>m.$
\end{lemma}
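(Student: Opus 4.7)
The plan is to adapt the normal-coordinate construction of Tian--Yau \cite{TY} (used for the single divisor case in Lemma A.2 of \cite{JMR}) to the simple normal crossings setting. I will perform three successive holomorphic adjustments at $p$, arranged so that each one uses a distinct set of degrees of freedom and does not spoil the conditions achieved by the preceding steps: first a linear change of the ``transverse'' coordinates to enforce the off-diagonal vanishing of $\hat g$ in (ii); next a quadratic change of the same coordinates to kill the first holomorphic $z$-derivatives of $\hat g$ demanded by (ii); and finally a rescaling of each divisorial coordinate $z_r$ (which modifies the frames $e_r$) to realize (i). Shrinking $\epsilon_{0}$ if necessary, I will present the ``most complicated'' sub-case $p\in\bigcap_{r\leq m}D_r$; when $p$ lies off some $D_r$ the same construction works after Taylor-expanding at $p$. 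At the outset I pick holomorphic coordinates $(z_1,\ldots,z_n)$ on a polydisk $U\ni p$ with $D_r=\{z_r=0\}$ for $r\leq m$, together with holomorphic frames $e_r^{(0)}$ of $[D_r]$ on $U$ with $S_r=z_r e_r^{(0)}$.

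For the first part of (ii), apply the constant-coefficient change $w_r=z_r$ for $r\leq m$ and $w_s=z_s-\sum_{i\leq m}\beta_{si}z_i$ for $s>m$, which visibly preserves $\{w_r=0\}=D_r$. The condition $\hat g'_{i\bar j}(p)=0$ for $i\leq m$, $j>m$ reduces to $\sum_{s>m}\beta_{si}\hat g_{s\bar j}(p)=-\hat g_{i\bar j}(p)$, uniquely solvable because the transverse block $[\hat g_{s\bar j}(p)]_{s,j>m}$ is a positive-definite Hermitian matrix (the restriction of the K\"ahler form to a complex subspace of dimension $n-m$). Rename the new coordinates as $(z_1,\ldots,z_n)$ and now perform the quadratic change $w_r=z_r$ for $r\leq m$ and $w_s=z_s+\tfrac{1}{2}\sum_{a,b}C^s_{ab}z_a z_b$ for $s>m$, with $C^s_{ab}=C^s_{ba}$ to be determined. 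Being tangent to the identity at $p$, this preserves the first normalization. A direct expansion of $\hat g'_{i\bar j}(w)$ to first order in $w$ around $p$ yields
\[
\hat g'_{i\bar j,k}(p)=\hat g_{i\bar j,k}(p)-\sum_{s>m}C^s_{ik}\hat g_{s\bar j}(p),
\]
and setting this to zero for $j>m$ is again a linear system with the same invertible block, uniquely determining the $C^s_{ik}$; the required symmetry $C^s_{ik}=C^s_{ki}$ follows from the K\"ahler identity $\hat g_{i\bar j,k}=\hat g_{k\bar j,i}$.

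For (i), with coordinates now fixed, perform the final rescaling $w_r=z_r h_r(z)$ for $r\leq m$ (and $w_s=z_s$), where each $h_r$ is a degree-$2$ polynomial in $z$ with $h_r(p)\ne 0$. The induced frame $e_r:=e_r^{(0)}/h_r$ satisfies $S_r=w_r e_r$, and the new norm $a_r=\|e_r^{(0)}\|_r^2/|h_r|^2$ transforms accordingly. The conditions $a_r(p)=1$, $da_r(p)=0$, and $\partial^2 a_r/\partial z_i\partial z_j(p)=0$ successively pin down $h_r(p)$, the first holomorphic derivatives $\partial_i h_r(p)$, and the second holomorphic derivatives $\partial_i\partial_j h_r(p)$ as explicit expressions in the data of the old $a_r^{(\text{old})}$; all of these are freely prescribable and together exhaust exactly the degree-$2$ jet of $h_r$ at $p$. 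Because the Jacobian of this last change is diagonal at $p$ and acts as the identity on the $s>m$ block, a short chain-rule check shows that both vanishings in (ii) survive intact.

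The main technical obstacle is the middle step: the allowed quadratic change of coordinates is forced (by the equations $\{w_r=0\}=D_r$) to touch only the $s>m$ coordinates, yet the first-derivative vanishing $\hat g_{i\bar j,k}(p)=0$ is demanded for all index pairs $(i,k)$ with $j>m$. The K\"ahler identity is exactly what balances the counts and guarantees that the resulting linear system admits a symmetric, and hence admissible, solution.
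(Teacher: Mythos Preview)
Your argument is correct and uses the same two ingredients as the paper: a holomorphic rescaling of the frames $e_r$ (your $h_r$, the paper's $F_r$) to obtain the jet conditions in (i), together with linear and quadratic changes of only the transverse coordinates $z_s$, $s>m$, to obtain (ii). The only genuine difference is the order in which the two normalizations are carried out: the paper first fixes the frames and sets $z_r=f'_r/F_r$ to secure (i), and then modifies the $z_s$ with $s>m$ to secure (ii); you do (ii) first and (i) last.

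Both orders work, but the paper's order is slightly more uniform for general $p$ near (but not on) $\bigcap_r D_r$. Once (i) is in place, the subsequent changes in (ii) touch only the $s>m$ coordinates, so $S_r=z_re_r$ is untouched and the conditions $da_r(p)=0$, $\partial_i\partial_j a_r(p)=0$ are preserved by the chain rule (the correction term carries a factor $\partial a_r(p)=0$). In your order, the final step $w_r=z_r h_r(z)$ must be checked not to disturb (ii); your verification hinges on the Jacobian of this step being diagonal at $p$, and that diagonality uses $z_r(p)=0$, i.e.\ $p\in\bigcap_r D_r$. For $p$ off the intersection your appeal to ``Taylor expansion at $p$'' would need one more line of justification. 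Since the curvature computation that follows in the paper explicitly restricts to the intersection case, your restriction is harmless there.
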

\begin{proof}
Actually this proof can be modified from the proof of Lemma A.2 in \cite{JMR}.  Fix any point q on the intersection of $D_{1},\cdots,D_{m},$ and choose
local holomorphic frames $e'_{1},\cdots,e'_{m}$ and holomorphic coordinates $(w_{1},\cdots,w_{n})$ in $B_{\hat{g}}(q,\epsilon(q))$ for some sufficient small
$\epsilon(q).$ Let $S_{r}=f'_{r}e'_{r}$ with $f'_{r}$ holomorphic functions and $||e'_{r}||_{r}^{2}=c_{r}.$ Let $e_{r}=F_{r}e'_{r}$ for some nonvanishing
holomorphic functions $F_{r}$ to be determined later. Then we have $a_{r}=||F_{r}e'_{r}||_{r}^{2}=|F_{r}|^{2}c_{r}.$ Now fix any point $p\in B_{\hat{g}}
(q,\epsilon(q)).$ In order for $a_{r}$ to satisfy condition (i) with respect to
coordinates $(w_{1},\cdots,w_{n})$ at point p, we choose $F_{r}$ such that $F_{r}(p)=c_{r}(p)^{-\frac{1}{2}}$ and
\begin{align*}
\partial_{w_{i}}F_{r}(p)&=-c_{r}^{-1}F_{r}(p)\partial_{w_{i}}c_{r}(p)=-c_{r}^{-\frac{3}{2}}\partial_{w_{i}}c_{r}(p)\\
\partial_{w_{i}}\partial_{w_{j}}F_{r}(p)&=-c_{r}^{-1}(F_{r}\partial_{w_{i}}\partial_{w_{j}}c_{r}+\partial_{w_{i}}c_{r}
\partial_{w_{j}}F_{r}+\partial_{w_{i}}F_{r}\partial_{w_{j}}c_{r})(p)\\&=-c_{r}^{-\frac{3}{2}}\partial_{w_{i}}
\partial_{w_{j}}c_{r}(p)+2c_{r}^{-\frac{5}{2}}\partial_{w_{i}}c_{r}\partial_{w_{j}}c_{r}(p).
\end{align*}
Since $c_{r}=||e'_{r}||_{r}^{2}$ is nonzero as $\epsilon(q)$ is small, which implies $|w-w(p)|$ is small, we can assume $F_{r}\neq 0$ in $B_{\hat{g}}
(q,\epsilon(q)).$ Now $S_{r}=f'_{r}e'_{r}=f_{r}e_{r}$ with $f_{r}=f'_{r}F_{r}^{-1}$ holomorphic functions. As $D_{r}=\{z_{r}=0\}$ are smooth divisors,
we can assume that $\partial_{w_{r}}f_{r}(q)\neq 0,$ but $\partial_{w_{s}}f_{r}(q)=0$ for $s\neq r$ among $1,\cdots,m,$ and choose small $\epsilon(q),$
we can assume that $\partial_{w_{r}}f_{r}\neq 0$ but $\partial_{w_{s}}f_{r}(q)$ sufficiently small for $s\neq r$ among $1,\cdots,m,$ in $B_{\hat{g}}
(q,\epsilon(q)).$ Then by the inverse function theorem,
$$z_{1}=f_{1}(w_{1},\cdots,w_{n}),\cdots,z_{m}=f_{m}(w_{1},\cdots,w_{n}),z_{m+1}=w_{m+1},\cdots,z_{n}=w_{n}$$ are holomorphic coordinates in
$B_{\hat{g}}(q,\epsilon(q)/2)$ and now $S_{r}=f_{r}(w)e_{r}=z_{r}e_{r}.$ By the chain rule,
(i) holds locally in the new coordinates. By covering argument (i) holds globally.\\

For (ii), we denote by $w^{1},\cdots,w^{n}$ the coordinates obtained in (i). First we can make a coordinate change such that at the point p,
$\frac{\partial}{\partial z_{j}}_{j>m}$ perpendicular to $\frac{\partial}{\partial z_{j}}_{j\leq m}.$ Let
$$\tilde{z}^{k}=w^{k}-w^{k}(p)+\frac{1}{2}b_{st}^{k}(w^{s}-w^{s}(p))(w^{t}-w^{t}(p)),$$ with
$b_{st}^{k}=b_{ts}^{k},$ define a new coordinate system. Then we have that
\begin{align*}
\dr_{0}&(\frac{\partial}{\partial w^{i}},\frac{\partial}{\partial\bar{w}^{j}})=\dr_{0}(\frac{\partial}{\partial \tilde{z}^{i}},\frac{\partial}
{\partial\bar{\tilde{z}}^{j}})+\hat{g}_{t\bar{j}}b_{is}^{t}(w^{s}-w^{s}(p))+\hat{g}_{i\bar{t}}\overline{b_{sj}^{t}
(w^{s}-w^{s}(p))}\\+&O(\sum_{r=1}^{n}|w^{r}-w^{r}(p)|^{2}),\\
d_{i\bar{j}k}&:=\frac{\partial}{\partial w^{k}}\dr_{0}(\frac{\partial}{\partial w^{i}},\frac{\partial}{\partial\bar{w}^{j}})|_{p}
=\frac{\partial}{\partial\tilde{z}^{k}}\dr_{0}(\frac{\partial}
{\partial\tilde{z}^{i}},\frac{\partial}{\partial\bar{\tilde{z}}^{j}})+
\hat{g}_{t\bar{j}}(p)b_{ik}^{t}=:e_{i\bar{j}k}+\hat{g}_{t\bar{j}}(p)b_{ik}^{t}.
\end{align*}
Let $\hat{g}'_{r\bar{s}}:=\hat{g}_{r\bar{s}},$ for $r,s>m,$ and denote the inverse of the $(n-m)\times(n-m)$ matrix $[\hat{g}'_{r\bar{s}}]$
by $[\hat{g}'^{r\bar{s}}].$ Let $b_{ik}^{r}=0$ for $r=1,\cdots,m.$ Then for each $j>m,$ the equations can be rewritten as
$d_{i\bar{j}k}-\sum\limits_{t>m}\hat{g}_{t\bar{j}}(p)b_{ik}^{t}=e_{i\bar{j}k}.$ Hence if we define $b_{ik}^{t}=\sum\limits_{j>m}\hat{g}'^{t\bar{j}}
d_{i\bar{j}k}$ for each $t>m,$ we will have that $e_{i\bar{j}k}=0$ for each $j>m.$ Finally, we set $z^{r}=\tilde{z}^{r}+w^{r}(p)$ for each
$r=1,\cdots,n,$ then these coordinates satisfy conditions (i) and (ii).
\end{proof}

Using Lemma \ref{lem-coordinate}, we can compute the metric tensors and their derivatives as following:
\begin{align*}
 g_{i\bar{j}}&=\hat{g}_{i\bar{j}}+\sum_{r=1}^{m}\left(\beta_{r}||S_{r}||_{r}^{2\beta_{r}}(\log||S_{r}||_{r}^{2})_{i}\right)_{\bar{j}}\\&=
 \hat{g}_{i\bar{j}}+\sum_{r=1}^{m}\left(\beta_{r}^{2}
\frac{(||S_{r}||_{r}^{2})_{i}(||S_{r}||_{r}^{2})_{\bar{j}}}{||S_{r}||_{r}^{4-2\beta_{r}}}
-\beta_{r}||S_{r}||_{r}^{2\beta_{r}}\Theta_{r,i\bar{j}}\right),
\end{align*}
where the last equality comes from Poincare-Lelong equation and
$\Theta_{r}=-\ddb\log a_{r}$ represents the curvature form of the line bundle $[D_{r}].$ Now for the first order
derivatives, we have
\begin{align*}
g_{i\bar{j},k}&=\hat{g}_{i\bar{j},k}+\sum_{r=1}^{m}\frac{\beta_{r}^{2}(\beta_{r}-2)(||S_{r}||_{r}^{2})_{i}(||S_{r}||_{r}^{2})_{\bar{j}}(||S_{r}||_{r}^{2})_{k}}
{||S_{r}||_{r}^{2(3-\beta_{r})}}\\&+\sum_{r=1}^{m}\beta_{r}^{2}
\frac{(||S_{r}||_{r}^{2})_{ik}(||S_{r}||_{r}^{2})_{\bar{j}}+(||S_{r}||_{r}^{2})_{i}(||S_{r}||_{r}^{2})_{\bar{j}k}}
{||S_{r}||_{r}^{2(2-\beta_{r})}}\\&-\sum_{r=1}^{m}\left(\beta_{r}^{2}
\frac{(||S_{r}||_{r}^{2})_{k}}{||S_{r}||_{r}^{2(1-\beta_{r})}}\Theta_{r,i\bar{j}}
+\beta_{r}||S_{r}||_{r}^{2\beta_{r}}\Theta_{r,i\bar{j},k}\right),
\end{align*}
Note that $||S_{r}||_{r}^{2}=a_{r}|z_{r}|^{2},$ using (i) of Lemma \ref{lem-coordinate}, we can obtain the second order derivatives at $p$:
\begin{align*}
g_{i\bar{j},k\bar{l}}(p)=&\hat{g}_{i\bar{j},k\bar{l}}+\sum_{r=1}^{m}\frac{\beta_{r}^{2}(\beta_{r}-1)^{2}
}{|z_{r}|^{2(2-\beta_{r})}}\delta_{ri}\delta_{rj}\delta_{rk}\delta_{rl}\\+&\sum_{r=1}^{m}
\frac{\beta_{r}^{3}}{|z_{r}|^{2(1-\beta_{r})}}
(a_{r,i\bar{j}}\delta_{rk}\delta_{rl}+a_{r,i\bar{l}}\delta_{rk}\delta_{rj}+a_{r,k\bar{j}}\delta_{ri}\delta_{rl}+
a_{r,k\bar{l}}\delta_{ri}\delta_{rj})\\+&\sum_{r=1}^{m}\beta_{r}^{2}|z_{r}|^{2\beta_{r}}
\left((a_{r,ik\bar{l}}\delta_{rj}+a_{r,i\bar{j}k}\delta_{rl})z_{r}+(a_{r,i\bar{j}\bar{l}}\delta_{rk}+
a_{r,\bar{j}k\bar{l}}\delta_{ri})\bar{z}_{r}\right)\\+&\sum_{r=1}^{m}\beta_{r}^{2}|z_{r}|^{2\beta_{r}}
(a_{r,i\bar{j}}a_{r,k\bar{l}}+a_{r,i\bar{l}}a_{r,k\bar{j}})|z_{r}|^{2}
-\sum_{r=1}^{m}\beta_{r}\Theta_{r,i\bar{j},k\bar{l}}.
\end{align*}
Meanwhile we can simplify the expressions of metric tensors and first order derivatives at point $p$:
\begin{align*}
g_{i\bar{j}}(p)&=\hat{g}_{i\bar{j}}+\sum_{r=1}^{m}\left(\frac{\beta_{r}^{2}\delta_{ri}\delta_{rj}}
{|z_{r}|^{2(1-\beta_{r})}}+\beta_{r}^{2}|z_{r}|^{2\beta_{r}}a_{r,i\bar{j}}\right)\\
g_{i\bar{j},k}(p)&=\hat{g}_{i\bar{j},k}+\sum_{r=1}^{m}\frac{\beta_{r}^{2}(\beta_{r}-1)}
{|z_{r}|^{2(2-\beta_{r})}}\bar{z}_{r}\delta_{ri}\delta_{rj}\delta_{rk}\\&+\sum_{r=1}^{m}
\frac{\beta_{r}^{2}\bar{z}_{r}}{|z_{r}|^{2(1-\beta_{r})}}(a_{r,i\bar{j}}\delta_{rk}
+a_{r,k\bar{j}}\delta_{ri})+\sum_{r=1}^{m}\beta_{r}^{2}|z_{r}|^{2\beta_{r}}a_{r,i\bar{j}k}.
\end{align*}
For $1\leq r,s\leq m, r\neq s$ and $m+1\leq t,t'\leq n,$ we can easily have
\begin{equation}\label{eq:metric inverse}
g^{r\bar{t}}(p)=O(|z_{r}|^{2(1-\beta_{r})}),\;g^{t\bar{t'}}=O(1).
\end{equation}
We need the following lemma to give a more precise estimate for $g^{r\bar{s}}(p)(1\leq r,s\leq m):$
\begin{lemma}\label{lem-diagonal}
\begin{align}
g^{r\bar{r}}(p)=\frac{\beta_{r}^{-2}|z_{r}|^{2(1-\beta_{r})}}{1+c_{r}(p)|z_{r}|^{2(1-\beta_{r})}}
+O(|z_{r}|^{4(1-\beta_{r})}\sum_{s=1}^{m}(|z_{s}|^{2(1-\beta_{s})}+|z_{s}|^{2\beta_{s}})),\label{eq:metric inverse-diagonal}\\
g^{r\bar{s}}(p)=\beta_{r}^{-2}\beta_{s}^{-2}|z_{r}|^{2(1-\beta_{r})}|z_{s}|^{2(1-\beta_{s})}((-1)^{r+s}\hat{g}_{s\bar{r}}+o(1))\;(r\neq s),
\label{eq:metric inverse-upper}
\end{align}
where $c_{r}(p):=\beta_{r}^{-2}\frac{det(\hat{g}_{i\bar{j}})_{i,j=r,m+1,\cdots,n}}{det(\hat{g}_{i\bar{j}})_{i,j=m+1,\cdots,n}}(p)$
and $0<C_{1}<c_{r}(p)<C_{2}$ for all $p\in M$ and $r=1,\cdots,m.$
\end{lemma}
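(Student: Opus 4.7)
The plan is to exploit the block structure of $g(p)$ induced by Lemma~\ref{lem-coordinate}, split off the dominant singular diagonal, and then read off $(g^{-1})_{r\bar s}$ from a Schur-complement/Neumann-series expansion.

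Grouping indices as ``singular'' $\{1,\dots,m\}$ versus ``regular'' $\{m+1,\dots,n\}$, part (ii) of Lemma~\ref{lem-coordinate} gives $\hat g_{r\bar t}(p)=0$ whenever $r\leq m<t$. Consequently the explicit formula for $g_{i\bar j}(p)$ shows
\[
g(p)=\begin{pmatrix} \Lambda+N & B \\ B^{*} & D\end{pmatrix},
\]
where $\Lambda=\operatorname{diag}(\Lambda_1,\dots,\Lambda_m)$ with $\Lambda_r=\beta_r^{2}|z_r|^{-2(1-\beta_r)}$ encodes the large singular diagonal, $N_{rs}=\hat g_{r\bar s}(p)+O\bigl(\sum_k|z_k|^{2\beta_k}\bigr)$ for $r,s\leq m$, the cross-block $B$ has entries of size $|z_r|^{2\beta_r}$ (its $\hat g$-part vanishes at $p$), and $D$ is uniformly comparable to the identity. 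I then form the Schur complement $S=(\Lambda+N)-BD^{-1}B^{*}$: since $(BD^{-1}B^{*})_{rs}=O(|z_r|^{2\beta_r}|z_s|^{2\beta_s})$, its eventual contribution to $g^{r\bar s}$ is of size $|z_r|^{4(1-\beta_r)+2\beta_r}|z_s|^{2\beta_s}$, safely inside the claimed error, so it suffices to invert $\Lambda+N$.

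The key identification is that $\beta_r^{2}c_r(p)$, as defined by the determinantal ratio, equals $\hat g_{r\bar r}(p)$. Indeed, by the Schur-complement expansion of the determinant,
\[
\frac{\det(\hat g_{i\bar j})_{i,j\in\{r,m+1,\dots,n\}}}{\det(\hat g_{i\bar j})_{i,j\in\{m+1,\dots,n\}}}
=\hat g_{r\bar r}(p)-\sum_{t,s>m}\hat g_{r\bar t}(p)\,(\hat g|_{>m})^{t\bar s}\,\hat g_{s\bar r}(p),
\]
and the sum vanishes at $p$ by (ii), giving $c_r(p)=\beta_r^{-2}\hat g_{r\bar r}(p)$; the global bounds $0<C_1<c_r<C_2$ follow from positive-definiteness of $\hat g$ and compactness of the underlying manifold. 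Next I expand $(\Lambda+N)^{-1}$ via the Neumann series $\Lambda^{-1}\sum_{k\geq 0}(-N\Lambda^{-1})^{k}$. The diagonal path staying at the index $r$ sums to $(\Lambda_r+N_{rr})^{-1}$; replacing $N_{rr}$ by its principal piece $\hat g_{r\bar r}(p)=\beta_r^{2}c_r(p)$ produces exactly $\beta_r^{-2}|z_r|^{2(1-\beta_r)}/(1+c_r(p)|z_r|^{2(1-\beta_r)})$, while the discrepancy $N_{rr}-\hat g_{r\bar r}(p)=O(\sum_k|z_k|^{2\beta_k})$ contributes an error of size $\Lambda_r^{-2}\cdot|z_k|^{2\beta_k}=O(|z_r|^{4(1-\beta_r)}|z_k|^{2\beta_k})$. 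Every non-trivial off-diagonal hop $r\to s\to\cdots\to r$ in the Neumann expansion picks up at least one factor $\Lambda_s^{-1}=O(|z_s|^{2(1-\beta_s)})$ with $s\neq r$, and therefore contributes at most $O(|z_r|^{4(1-\beta_r)}|z_s|^{2(1-\beta_s)})$; summing all such paths yields~(\ref{eq:metric inverse-diagonal}). Formula~(\ref{eq:metric inverse-upper}) comes from the first off-diagonal Neumann term $-\Lambda_r^{-1}N_{rs}\Lambda_s^{-1}$, rewritten via Cramer's cofactor rule to produce the signed form $(-1)^{r+s}\hat g_{s\bar r}$, with all longer paths absorbed into the $o(1)$ factor.

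The main obstacle will be the tight bookkeeping needed to verify that every discarded term (diagonal perturbations of $N$, off-diagonal Neumann hops, and the $BD^{-1}B^{*}$ correction) truly fits the asymmetric error $O\bigl(|z_r|^{4(1-\beta_r)}\sum_s(|z_s|^{2(1-\beta_s)}+|z_s|^{2\beta_s})\bigr)$; the two summands in the bracket correspond respectively to Neumann hops and to the smooth perturbation $\beta_k^{2}|z_k|^{2\beta_k}a_{k,i\bar j}$. Condition (ii) of Lemma~\ref{lem-coordinate} is indispensable for the argument, since without it the Schur-complement correction would spoil the clean identification $c_r(p)=\beta_r^{-2}\hat g_{r\bar r}(p)$.
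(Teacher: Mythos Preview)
Your approach is correct and reaches the same conclusions, but it is genuinely different from the paper's. The paper proceeds by brute-force Cramer's rule: it writes $g_{i\bar j}(p)=a_{i\bar j}+b_i\delta_{ij}$ with $b_r=\beta_r^{2}|z_r|^{-2(1-\beta_r)}$, then expands $\det g$ and the cofactors $G_{1\bar 1},G_{1\bar 2}$ by multilinearity of the determinant row by row, obtaining expressions like $\det g=b_1\cdots b_m A_R+b_1\cdots b_m\sum_r A_{r,R}/b_r+\cdots$, and finally takes the quotient $G_{1\bar 1}/\det g$ and expands the resulting rational function. Your Schur-complement plus Neumann-series argument replaces this explicit cofactor bookkeeping with a perturbative expansion of $(\Lambda+N)^{-1}$; the paper's combinatorics of minors $A_{r_1,\dots,r_k,R}$ is exactly the same information encoded in your path expansion, but your packaging makes the origin of each error term (diagonal perturbation of $N$, off-diagonal hop, $BD^{-1}B^*$) more transparent. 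The paper's route has the small advantage of not needing any convergence justification for a series.

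Two minor remarks. First, your claim that the cross-block entries $B_{rt}$ are of size $|z_r|^{2\beta_r}$ is slightly off: from $g_{r\bar t}(p)=\hat g_{r\bar t}(p)+\sum_{k}\beta_k^{2}|z_k|^{2\beta_k}a_{k,r\bar t}$ and $\hat g_{r\bar t}(p)=0$ one gets $B_{rt}=O(\sum_k|z_k|^{2\beta_k})$, not just the $k=r$ summand. This does not damage the argument, since $(BD^{-1}B^*)_{rr}$ is quadratic in $\sum_k|z_k|^{2\beta_k}$ and after multiplying by $\Lambda_r^{-2}$ still lands inside the stated error. Second, in your derivation of~(\ref{eq:metric inverse-upper}) the leading Neumann term $-\Lambda_r^{-1}N_{rs}\Lambda_s^{-1}$ has coefficient $-\hat g_{r\bar s}$, whereas the lemma records it as $(-1)^{r+s}\hat g_{s\bar r}$; the paper obtains the latter directly from the cofactor $G_{1\bar 2}=-a_{2\bar 1}b_3\cdots b_m A_R+\cdots$. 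The two expressions agree for the case $m=2$ used downstream (where $(-1)^{r+s}=-1$ and the Hermitian symmetry is harmless in the quadratic form $\sum g^{r\bar s}E_{r\bar r r}\overline{E_{s\bar s s}}$), so this is a cosmetic discrepancy rather than an error.
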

\begin{proof}
Actually we only need to prove the result for $r=1,s=2.$ For simplicity we denote $$a_{i\bar{j}}:=\hat{g}_{i\bar{j}}(p)
+\sum_{r=1}^{m}\beta_{r}^{2}||S_{r}||_{r}^{2\beta_{r}}a_{r,i\bar{j}},\;b_{r}=\frac{\beta_{r}^{2}}{|z_{r}|^{2(1-\beta_{r})}},$$
then we know that $g_{i\bar{j}}=a_{i\bar{j}}+b_{i}\delta_{ij}$ where $i\leq m.$ By determinant rule we know that
$g^{1\bar{1}}(p)=\frac{G_{1\bar{1}}}{det g}$ where $G_{1\bar{1}}$ represents the cofactor of $g_{1\bar{1}}.$ Let us
compute $G_{1\bar{1}}$ and $det g$ respectively. We can denote $A_{r_{1},r_{2},\cdots,r_{k},R}$ as the (k+n-m)-th
minor $det(a_{i_{p}\bar{i}_{q}})_{r_{1},\cdots r_{k},m+1,\cdots,n},$ where $1\leq r_{1}<\cdots<r_{k}\leq m.$ Now
make use of determinant rules, we can have such decomposition of $det\;g:$
\begin{align*}
det\;g=&
\begin{vmatrix}
b_{1}        & 0                 & \cdots  &  0           \\
a_{2\bar{1}} &a_{2\bar{2}}+b_{2} & \cdots  &  a_{2\bar{n}}\\
             &                   & \cdots  &              \\
a_{n\bar{1}} &                   & \cdots  &  a_{n\bar{n}}\\
\end{vmatrix}
+
\begin{vmatrix}
a_{1\bar{1}} & a_{1\bar{2}}      & \cdots  &  a_{1\bar{n}}\\
a_{2\bar{1}} &a_{2\bar{2}}+b_{2} & \cdots  &  a_{2\bar{n}}\\
             &                   & \cdots  &              \\
a_{n\bar{1}} &                   & \cdots  &  a_{n\bar{n}}\\
\end{vmatrix}
\\=&
\begin{vmatrix}
b_{1}        & 0                 & \cdots  &  0           \\
0            & b_{2}             & \cdots  &  0           \\
             &                   & \cdots  &              \\
a_{n\bar{1}} &                   & \cdots  &  a_{n\bar{n}}\\
\end{vmatrix}
+
\begin{vmatrix}
b_{1}        & 0                 & \cdots  &  0           \\
a_{2\bar{1}} &a_{2\bar{2}}       & \cdots  &  a_{2\bar{n}}\\
             &                   & \cdots  &              \\
a_{n\bar{1}} &                   & \cdots  &  a_{n\bar{n}}\\
\end{vmatrix}
\\+&
\begin{vmatrix}
a_{1\bar{1}} & a_{1\bar{2}}      & \cdots  &  a_{1\bar{n}}\\
0            & b_{2}             & \cdots  &  0           \\
             &                   & \cdots  &              \\
a_{n\bar{1}} &                   & \cdots  &  a_{n\bar{n}}\\
\end{vmatrix}
+
\begin{vmatrix}
a_{1\bar{1}} & a_{1\bar{2}}      & \cdots  &  a_{1\bar{n}}\\
a_{2\bar{1}} &a_{2\bar{2}}       & \cdots  &  a_{2\bar{n}}\\
             &                   & \cdots  &              \\
a_{n\bar{1}} &                   & \cdots  &  a_{n\bar{n}}\\
\end{vmatrix}
=\cdots+\cdots
\end{align*}
\begin{align*}
=&
\begin{vmatrix}
b_{1}          & 0                 & \cdots  & 0          & \cdots& 0  \\
0              & b_{2}             & \cdots  & 0          & \cdots& 0  \\
               &                   & \ddots  &            &       &    \\
0              &                   & \cdots  & b_{m}      &       & 0  \\
a_{m+1,\bar{1}}&                   & \cdots  &            &       & a_{m+1,\bar{n}}\\
               &                   & \cdots  &            &       &    \\
a_{n\bar{1}}   &                   & \cdots  &            &       &a_{n\bar{n}}\\
\end{vmatrix}
\end{align*}
\begin{align*}
+&\left(
\begin{vmatrix}
b_{1}          & 0                 & \cdots  & 0          & \cdots& 0  \\
0              & b_{2}             & \cdots  & 0          & \cdots& 0  \\
               &                   & \ddots  &            &       &    \\
0              &                   & \cdots  & b_{m-1}    &       & 0  \\
a_{m,\bar{1}}  &                   & \cdots  &            &       & a_{m,\bar{n}}\\
               &                   & \cdots  &            &       &    \\
a_{n\bar{1}}   &                   & \cdots  &            &       &a_{n\bar{n}}\\
\end{vmatrix}
+\cdots+
\begin{vmatrix}
a_{1\bar{1}}   & a_{1\bar{2}}      & \cdots  & 0          & \cdots& a_{1\bar{n}}  \\
0              & b_{2}             & \cdots  & 0          & \cdots& 0  \\
               &                   & \ddots  &            &       &    \\
0              &                   & \cdots  & b_{m}      &       & 0  \\
a_{m+1,\bar{1}}&                   & \cdots  &            &       & a_{m+1,\bar{n}}\\
               &                   & \cdots  &            &       &    \\
a_{n\bar{1}}   &                   & \cdots  &            &       &a_{n\bar{n}}\\
\end{vmatrix}
\right)
\\+&\cdots+
\begin{vmatrix}
a_{1\bar{1}} & a_{1\bar{2}}      & \cdots  &  a_{1\bar{n}}\\
             &                   & \cdots  &              \\
a_{m\bar{1}} &                   & \cdots  &  a_{m\bar{n}}\\
             &                   & \cdots  &              \\
a_{n\bar{1}} &                   & \cdots  &  a_{n\bar{n}}\\
\end{vmatrix}
\\=&b_{1}b_{2}\cdots b_{m}A_{R}+b_{1}b_{2}\cdots b_{m}\sum_{r=1}^{m}\frac{A_{r,R}}{b_{r}}+b_{1}b_{2}\cdots
b_{m}\sum_{1\leq r<s}^{m}\frac{A_{r,s,R}}{b_{r}b_{s}}+\cdots+det(a_{i\bar{j}}).
\end{align*}
Similarly, we have that
$$G_{1\bar{1}}=b_{2}b_{3}\cdots b_{m}\left(A_{R}+\sum_{r=2}^{m}\frac{A_{r,R}}{b_{r}}+\sum_{2\leq r<s}^{m}
\frac{A_{r,s,R}}{b_{r}b_{s}}\right)+\cdots+det(a_{i\bar{j}})_{2\leq i,j\leq m},$$
$$G_{1\bar{2}}=-a_{2\bar{1}}b_{3}\cdots b_{m}A_{R}+\cdots.$$
Now we can have such estimate:
\begin{align*}
 \frac{G_{1\bar{1}}}{det\;g}=b_{1}^{-1}\frac{1+\sum_{r=2}^{m}\frac{A_{r,R}}{A_{R}b_{r}}+\sum_{2\leq r<s}^{m}\frac{A_{r,s,R}}
{A_{R}b_{r}b_{s}}+\cdots}{1+\sum_{r=1}^{m}\frac{A_{r,R}}{A_{R}b_{r}}+\sum_{1\leq r<s}^{m}\frac{A_{r,s,R}}{A_{R}b_{r}b_{s}}+\cdots}
=\frac{b_{1}^{-1}}{B},
\end{align*}
where
\begin{align*}
 B&=(1+\sum_{r=1}^{m}\frac{A_{r,R}}{A_{R}b_{r}}+\sum_{1\leq r<s}^{m}\frac{A_{r,s,R}}{A_{R}b_{r}b_{s}}+\cdots)\{1-(\sum_{r=2}^{m}
\frac{A_{r,R}}{A_{R}b_{r}}+\sum_{2\leq r<s}^{m}\frac{A_{r,s,R}}{A_{R}b_{r}b_{s}}+\cdots)\\&+(\sum_{r=2}^{m}\frac{A_{r,R}}{A_{R}b_{r}}
+\sum_{2\leq r<s}^{m}\frac{A_{r,s,R}}{A_{R}b_{r}b_{s}}+\cdots)^{2}+\cdots\}\\&=1+\frac{A_{1,R}}{A_{R}b_{1}}+\frac{1}{A_{R}b_{1}}
\sum_{r=2}^{m}\frac{1}{b_{r}}(A_{1,r,R}-\frac{A_{1,R}A_{r,R}}{A_{R}})+O(\frac{1}{b_{1}}\sum_{r=2}^{m}\frac{1}{b_{r}^{2}}).
\end{align*}
Note that $$a_{i\bar{j}}:=\hat{g}_{i\bar{j}}(p)+\sum_{s=1}^{m}\beta_{s}^{2}|z_{s}|^{2\beta_{s}}a_{s,i\bar{j}}
=\hat{g}_{i\bar{j}}(p)+O(\sum_{s=1}^{m}|z_{s}|^{2\beta_{s}}),$$ \eqref{eq:metric inverse-diagonal} follows. \eqref{eq:metric inverse-upper}
follows from the computation of $G_{1\bar{2}}$ more easily.
\end{proof}

Take two unit vectors $\eta=\eta^{i}\frac{\partial}{\partial z_{i}},\nu=\nu^{i}\frac{\partial}{\partial z_{i}}\in T_{p}^{1,0}M,$
so that $g(\eta,\eta)|_{p}=g(\nu,\nu)|_{p}=1.$ Then from the expression of $g_{i\bar{j}}$ we have
\begin{equation}\label{eq:unit vector}
 \eta^{r},\nu^{r}=O(|z_{r}|^{1-\beta_{r}}),\eta^{t},\nu^{t}=O(1)\;for\;r=1,\cdots,m,\;t=m+1,\cdots,n.
\end{equation}
By the definition of bisectional curvature, we set
$$Bisec_{\dr}(\eta,\nu)=R(\eta,\bar{\eta},\nu,\bar{\nu})=R_{i\bar{j}k\bar{l}}\eta^{i}\bar{\eta^{j}}\nu^{k}\bar{\eta^{l}}=
\sum_{i,j,k,l}(\Lambda_{i\bar{j}k\bar{l}}+\Pi_{i\bar{j}k\bar{l}}),$$
with $\Lambda_{i\bar{j}k\bar{l}}:=-g_{i\bar{j},k\bar{l}}\eta^{i}\bar{\eta^{j}}\nu^{k}\bar{\eta^{l}},$ and $\Pi_{i\bar{j}k\bar{l}}
:=g^{p\bar{q}}g_{i\bar{q},k}g_{p\bar{j},\bar{l}}\eta^{i}\bar{\eta^{j}}\nu^{k}\bar{\eta^{l}}$ (no summations on $i,j,k,l$). By
\eqref{eq:metric inverse}-\eqref{eq:unit vector} we have
$|\Lambda_{i\bar{j}k\bar{l}}|\leq C$ except for the terms $\sum\limits_{r=1}^{m}\Lambda_{r\bar{r}r\bar{r}},$ hence
\begin{align}\label{eq:bisec-1}
 \sum_{i,j,k,l}\Lambda_{i\bar{j}k\bar{l}}(p)=&O(1)-\sum_{r=1}^{m}\frac{\beta_{r}^{2}(\beta_{r}-1)^{2}}{|z_{r}|^{2(2-\beta_{r})}}|\eta^{r}|^{2}|\nu^{r}|^{2}.
\end{align}

Now we can deal with the first case in Theorem \ref{thm-upper}. For this case We have the following lemma:
\begin{lemma}\label{lem-bisec-2}
 In case that either no three irreducible divisors intersect or all angles $\beta_{i}\leq\frac{1}{2},$ there exists a
 uniform constant $C>0$ such that for every $p\in M,$
\begin{equation}\label{eq:bisec2}
 \sum_{i,j,k,l}\Pi_{i\bar{j}k\bar{l}}(p)\leq C+\sum_{r=1}^{m}\frac{\beta_{r}^{2}(\beta_{r}-1)^{2}}{|z_{r}|^{2(2-\beta_{r})}}|\eta^{r}|^{2}|\nu^{r}|^{2}.
\end{equation}
\end{lemma}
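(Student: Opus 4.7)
The plan is a direct computation at the point under consideration, using the explicit expressions for $g_{i\bar{j},k}$ derived above and the estimates for the inverse components $g^{p\bar{q}}$ from \eqref{eq:metric inverse} and Lemma \ref{lem-diagonal}. To organize the bookkeeping I rewrite
$$\sum_{i,j,k,l}\Pi_{i\bar{j}k\bar{l}} = \sum_{p,q} g^{p\bar{q}} A_q \bar{A}_p,\qquad A_q := \sum_{i,k} g_{i\bar{q},k}\,\eta^i\nu^k,$$
viewing this as a Hermitian quadratic form in the vector $A=(A_q)_q$. Reading off the most singular piece of $g_{i\bar{q},k}$, for $q=r\le m$ I isolate
$$A_r^{\text{main}} := \frac{\beta_r^{2}(\beta_r-1)\,\bar{z}_r}{|z_r|^{2(2-\beta_r)}}\,\eta^r\nu^r,$$
and write $A_q = A_q^{\text{main}} + A_q^{\text{rest}}$, with $A_q^{\text{main}}\equiv 0$ whenever $q>m$.

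The first check will be that the diagonal, purely main contribution $\sum_{r} g^{r\bar{r}}|A_r^{\text{main}}|^2$ reproduces exactly the singular right-hand side of \eqref{eq:bisec2}: substituting \eqref{eq:metric inverse-diagonal} gives
$$g^{r\bar{r}}|A_r^{\text{main}}|^2 = \frac{\beta_r^{2}(\beta_r-1)^{2}}{|z_r|^{2(2-\beta_r)}\bigl(1+c_r|z_r|^{2(1-\beta_r)}\bigr)}\,|\eta^r|^2|\nu^r|^2 + (\text{lower order}),$$
and the damping factor $\bigl(1+c_r|z_r|^{2(1-\beta_r)}\bigr)^{-1}\le 1$ ensures the coefficient matches precisely the one in \eqref{eq:bisec2}. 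Each term of $A_q^{\text{rest}}$ then carries either a factor $|z_r|^{\beta_r}$ (from the secondary singular piece $\bar{z}_r/|z_r|^{2(1-\beta_r)}$ paired with a unit-vector component $|\eta^r|$ or $|\nu^r|$) or a factor $|z_r|^{2\beta_r}$ (from the smooth piece of $g_{i\bar{j},k}$), so $|A_q^{\text{rest}}|=O(1)$ with extra smallness near $D$; combined with $g^{r\bar{r}}=O(|z_r|^{2(1-\beta_r)})$ and AM-GM, the diagonal rest-rest and main-rest interactions contribute at most $O(1)$.

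It remains to estimate the off-diagonal pieces $g^{p\bar{q}}A_q\bar{A}_p$ for $p\neq q$. Using the quantitatively small bounds $|g^{p\bar{q}}|\le C|z_p|^{2(1-\beta_p)}|z_q|^{2(1-\beta_q)}$ for $p,q\le m$, $p\neq q$, and $g^{r\bar{t}}=O(|z_r|^{2(1-\beta_r)})$, $g^{t\bar{t'}}=O(1)$ from \eqref{eq:metric inverse} and Lemma \ref{lem-diagonal}, together with the unit-vector bounds \eqref{eq:unit vector}, the only piece that is not automatically $O(1)$ is the main-main cross contribution $g^{p\bar{q}}A_q^{\text{main}}\overline{A_p^{\text{main}}}$ with $p\neq q$ and both in $\{1,\ldots,m\}$, whose modulus is of order $|z_p|^{1-2\beta_p}|z_q|^{1-2\beta_q}$. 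This is exactly where the case hypothesis is used: if every $\beta_i\le \tfrac{1}{2}$ then both exponents $1-2\beta_p$ and $1-2\beta_q$ are nonnegative and the bound is automatic; if instead no three irreducible divisors meet, then at any given point at most two of the $|z_r|$ with $r\le m$ can simultaneously degenerate, so only one pair $(p,q)$ of small indices enters the sum, and Cauchy--Schwarz combined with the relative smallness $|g^{p\bar{q}}|/\sqrt{g^{p\bar{p}}g^{q\bar{q}}}=O(|z_p|^{1-\beta_p}|z_q|^{1-\beta_q})$ allows the cross term to be absorbed into the diagonal. The main obstacle of the proof will be precisely this last step, since it requires a careful matching of coefficients rather than a crude estimate: the sharp constant $\beta_r^{2}(\beta_r-1)^{2}$ on the right of \eqref{eq:bisec2} must be preserved without picking up any extra multiplicative loss, and the uniform constant $C$ must be independent of the base point.
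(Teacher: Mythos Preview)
Your decomposition and bookkeeping match the paper's approach (the paper writes $g_{i\bar jk}=A+B+D+E$ with your $A^{\text{main}}$ equal to $E$ contracted with $\eta,\nu$), and you correctly identify the diagonal main--main piece and the case split. There is, however, a real gap in the step where you assert that the diagonal main--rest interactions are $O(1)$.

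The ``rest'' tensor contains the piece $A_{i\bar jk}=\hat g_{i\bar jk}$, which for $j=r\le m$ is a generic smooth $O(1)$ quantity carrying \emph{no} extra factor of $|z_r|$ after contraction with $\eta^i\nu^k$ (the coordinate normalization only kills $\hat g_{i\bar jk}$ for $j>m$). Hence
\[
g^{r\bar r}\,A_r^{\text{rest}}\,\overline{A_r^{\text{main}}}
\;=\;O\bigl(|z_r|^{2(1-\beta_r)}\bigr)\cdot O(1)\cdot |E_{r\bar rr}\eta^r\nu^r|
\;=\;O\bigl(|z_r|^{1-2\beta_r}\bigr),
\]
which blows up whenever $\beta_r>\tfrac12$; a plain AM--GM here simply doubles the main diagonal and overshoots the sharp constant on the right of \eqref{eq:bisec2}. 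The paper's fix is to use the weighted split
\(
2\,|z_r|^{2(1-\beta_r)}|a||b|\le C_\delta|a|^2+\delta\,|z_r|^{4(1-\beta_r)}|b|^2,
\)
so that the $E$--contribution from the cross term is $\delta\,|z_r|^{4(1-\beta_r)}|E_{r\bar rr}|^2|\eta^r|^2|\nu^r|^2$, i.e.\ it comes with the \emph{same} power $|z_r|^{4(1-\beta_r)}$ as the slack produced by the damping factor $(1+c_r|z_r|^{2(1-\beta_r)})^{-1}$ in $g^{r\bar r}$.

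The second subtlety is in the off-diagonal main--main term for $m=2$. Your observation that the ratio $|g^{r\bar s}|/\sqrt{g^{r\bar r}g^{s\bar s}}=O(|z_r|^{1-\beta_r}|z_s|^{1-\beta_s})$ is correct but insufficient: the available slack in the diagonal damping is of the \emph{same} order, so an $O(\cdot)$ bound cannot close the argument. What is actually needed is the precise form \eqref{eq:metric inverse-upper}, the identity $c_r(p)=\beta_r^{-2}\hat g_{r\bar r}(p)$ in the chosen coordinates, and the strict Cauchy--Schwarz $|\hat g_{1\bar 2}|^2\le c_0^2\,\hat g_{1\bar 1}\hat g_{2\bar 2}$ with a uniform $c_0<1$ coming from positive definiteness of $\hat g$. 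Together these give the off-diagonal contribution $\le c_0\sum_r\hat g_{r\bar r}\beta_r^{-4}|z_r|^{4(1-\beta_r)}|E_{r\bar rr}|^2$, which exactly matches the slack up to the factor $c_0$; choosing $\delta$ small so that $c_0+2\delta<1$ then absorbs both the off-diagonal main--main and the main--rest pieces simultaneously. You anticipated that a ``careful matching of coefficients'' is the obstacle; this matching is precisely the pair $\bigl(c_r=\beta_r^{-2}\hat g_{r\bar r},\;c_0<1\bigr)$.
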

\begin{proof}
By Brendle's computation in \cite{Br}, we can easily bound all the terms if $\beta_{i}\leq\frac{1}{2}$ for all i. Now we consider the general case.
As lemma A.3 in \cite{JMR}, we define a bilinear Hermitian form of two tensors $a=[a_{i\bar{q}k}],b=[b_{j\bar{p}l}]\in(\C)^{3}$ satisfying
$a_{i\bar{q}k}=a_{k\bar{q}i},b_{j\bar{p}l}=b_{l\bar{p}j}$ by setting $$\langle[a_{i\bar{q}k}],[b_{j\bar{p}l}]\rangle:=
\sum_{i,j,k,l,p,q}g^{p\bar{q}}(\eta^{i}g_{i\bar{q},k}\nu^{k})(\overline{\eta^{j}g_{j\bar{p},l}\nu^{l}}).$$
Obviously it is a nonnegative bilinear form. We denote by $||\cdot||$ the associated norm. Then $\sum\limits_{i,j,k,l}
\Pi_{i\bar{j}k\bar{l}}=||[a_{i\bar{j}k}]||^{2}.$ We write $$g_{i\bar{j},k}=A_{i\bar{j}k}+B_{i\bar{j}k}+D_{i\bar{j}k}+E_{i\bar{j}k}$$
with $$A_{i\bar{j}k}:=\hat{g}_{i\bar{j},k},\;B_{i\bar{j}k}:=\sum_{r=1}^{m}\beta_{r}^{2}|z_{r}|^{2\beta_{r}}a_{r,i\bar{j}k},$$
$$D_{i\bar{j}k}:=\sum_{r=1}^{m}\frac{\beta_{r}^{2}\bar{z}_{r}}{|z_{r}|^{2(1-\beta_{r})}}(a_{r,i\bar{j}}\delta_{rk}
+a_{r,k\bar{j}}\delta_{ri}),$$
$$E_{i\bar{j}k}:=\sum_{r=1}^{m}\frac{\beta_{r}^{2}(\beta_{r}-1)}
{|z_{r}|^{2(2-\beta_{r})}}\bar{z}_{r}\delta_{ri}\delta_{rj}\delta_{rk}.$$
Denote $A:=[A_{i\bar{q}k}]$ and similarly $B,D,E.$ Using \eqref{eq:metric inverse}, we can bound $||A+B+D||^{2}$ easily. For the crossing terms of A, B, D and E, we
have that
\begin{align*}
 2Re\langle A,E\rangle&=2Re(\sum_{i,j,k,r}g^{r\bar{j}}\hat{g}_{i\bar{j},k}\overline{E_{r\bar{r}r}})\\&\leq C\sum_{i,j,k}|
\hat{g}_{i\bar{j},k}|^{2}+\delta\sum_{r}|z_{r}|^{2(1-\beta_{r})}||E_{r\bar{r}r}||^{2}\leq C+\delta\sum_{r}|z_{r}|^{4(1-\beta_{r})}\hat{g}_{r\bar{r}}(p)
\beta_{r}^{-2}|E_{r\bar{r}r}|^{2},
\end{align*}
 where $\delta$ can be chosen small enough. By the same argument, we also have that
$$2Re\langle B+D,E\rangle\leq C+\delta\sum_{r}|z_{r}|^{4(1-\beta_{r})}\hat{g}_{r\bar{r}}(p)\beta_{r}^{-2}|E_{r\bar{r}r}|^{2}.$$
Now let us consider $||E||^{2}:$
In case that at most two divisors intersect transversely, we can take $m=2.$ As there exists a uniform constant $c_{0}<1$ such that $|\hat{g}_{1\bar{2}}|^{2}(p)\leq c_{0}^{2}\hat{g}_{1\bar{1}}(p)\hat{g}_{2\bar{2}}(p),$
By \eqref{eq:metric inverse-diagonal} and \eqref{eq:metric inverse-upper}, we have that
\begin{align*}
 ||E||^{2}(p)&=\sum_{r=1}^{2}g^{r\bar{r}}|E_{r\bar{r}r}|^{2}+\sum_{r\neq s}^{2}g^{r\bar{s}}E_{r\bar{r}r}\overline{E_{s\bar{s}s}}\\
&\leq\sum_{r=1}^{2}\frac{\beta_{r}^{-2}|z_{r}|^{2(1-\beta_{r})}}{1+c_{r}(p)|z_{r}|^{2(1-\beta_{r})}}|E_{r\bar{r}r}|^{2}+\sum_{r=1}^{2}c_{0}\hat{g}_{r\bar{r}}(p)
\beta_{r}^{-4}|z_{r}|^{4(1-\beta_{r})}|E_{r\bar{r}r}|^{2}\\&=\sum_{r=1}^{2}\beta_{r}^{-2}|z_{r}|^{2(1-\beta_{r})}\left(\frac{1+c_{0}\hat{g}_{r\bar{r}}(p)
\beta_{r}^{-2}|z_{r}|^{2(1-\beta_{r})}}{1+c_{r}(p)|z_{r}|^{2(1-\beta_{r})}}+o(1)\right)|E_{r\bar{r}r}|^{2}.
\end{align*}
Add these estimates together when $m=2$ or all cone angles $\beta_{r}\in (0,\frac{1}{2}),$ we obtain that
$$\sum_{i,j,k,l}\Pi_{i\bar{j}k\bar{l}}(p)\leq C+\sum_{r=1}^{m}\beta_{r}^{-2}|z_{r}|^{2(1-\beta_{r})}\left(\frac{1+(c_{0}+2\delta)\hat{g}_{r\bar{r}}(p)
\beta_{r}^{-2}|z_{r}|^{2(1-\beta_{r})}}{1+c_{r}(p)|z_{r}|^{2(1-\beta_{r})}}+o(1)\right)|E_{r\bar{r}r}|^{2}$$
As in our coordinate system $c_{r}(p)=\beta_{r}^{-2}\frac{A_{r,R}}{A_{R}}=\beta_{r}^{-2}\hat{g}_{r\bar{r}}(p),$ by choosing $\delta>0$ such that $2\delta+c_{0}<1,$ we obtain the lemma.
\end{proof}

 For triple or higher multiple singularities, i.e.$m>2,$ generally we do not have that the inequality
\begin{align*}
\begin{bmatrix}
  0                   & -\hat{g}_{1\bar{2}} & \cdots  &  (-1)^{m+1}\hat{g}_{1\bar{m}}\\
  -\hat{g}_{2\bar{1}}  &  0                 & \cdots  &  (-1)^{m+2}\hat{g}_{2\bar{m}}\\
                      &                    & \ddots  &                    \\
  (-1)^{m+1}\hat{g}_{m\bar{1}}  &                    & \cdots  &  0                 \\
\end{bmatrix}
<
\begin{bmatrix}
\hat{g}_{m\bar{m}} &  0                  & \cdots  &  0\\
0                  &\hat{g}_{m\bar{m}}   & \cdots  &  0\\
                   &                     & \ddots  &   \\
0                  &  0                  & \cdots  &  \hat{g}_{m\bar{m}}\\
\end{bmatrix}
\end{align*}
That is why we cannot control the crossing terms in $||E||^{2}$ by its diagonal terms so well as $m=2.$ Fortunately this observation implies that we can increase
the diagonal terms of $\hat{g}$ such that this matrix inequality holds for the modified metric. In applications to geometric problems we only change the metric in the same cohomology class.
One natural idea is to consider the new background metric with the following form
\begin{equation}\label{eq:new bkgd}
\dr'_{0}=\dr_{0}+\ddb\sum\limits_{i=1}^{m}\varphi_{r}(||S_{r}||_{r}^{2}),
\end{equation}
where $\varphi_{r}(||S_{r}||_{r}^{2})$ behaves as $\lambda_{r}||S_{r}||_{r}^{2}$ for some suitable $\lambda_{r}>0$
near $D_{r}=(S_{r}=0)$ and tends to 0 far away such that it preserves the positivity of the new background metric. This can be done by suitable cutoff argument. To see how much this modification changes the bisectional curvature we first compute the corresponding derivatives of the new background metric tensors (we denote $\hat{g}'$ as the metric tensors of $\dr'_{0}$) as in the previous section:
\begin{align*}
\hat{g}_{i\bar{j}}'=&\hat{g}_{i\bar{j}}+\sum_{r=1}^{m}\varphi'_{r}(||S_{r}||_{r}^{2})(a_{r}\delta_{ir}\delta_{jr}+a_{r,i}\delta_{jr}z_{r}
+a_{r,\bar{j}}\delta_{ir}\bar{z}_{r}+a_{r,i\bar{j}}|z_{r}|^{2})\\+&\sum_{r=1}^{m}\varphi''_{r}(||S_{r}||_{r}^{2})(a_{r}\delta_{ir}\bar{z}_{r}+
a_{r,i}|z_{r}|^{2})(a_{r}\delta_{jr}z_{r}+a_{r,\bar{j}}|z_{r}|^{2}).
\end{align*}
At the chosen point $p,$ by our assumption that $\varphi_{r}(t)=\lambda_{r}t$ when $t$ is small and the chosen coordinate system,  we get that $$\hat{g}_{i\bar{j}}'(p)=\hat{g}_{i\bar{j}}(p)+\sum_{r=1}^{m}\lambda_{r}(\delta_{ir}\delta_{jr}+O(|z_{r}|)),$$ Meanwhile by the order of the error terms we can easily construct cut-off functions so that after modification the new metric is uniformly equivalent to the original metric.
And similarly, we can also get that
$$\hat{g}_{i\bar{j},k}'(p)=\hat{g}_{i\bar{j},k}(p)+O(1),\;\hat{g}_{i\bar{j},k\bar{l}}'(p)=\hat{g}_{i\bar{j},k\bar{l}}(p)+O(1).$$
Using these estimates in the computation of the previous section, we find that almost any estimates do not change except for the estimate of
$||E||^{2}$, due to the change of the background metric tensors. Let us rewrite this formula for the modified metric:
\begin{align*}
||E||^{2}(p)&=\sum_{r=1}^{m}g'^{r\bar{r}}|E_{r\bar{r}r}|^{2}+\sum_{r\neq s}^{m}g'^{r\bar{s}}E_{r\bar{r}r}\overline{E_{s\bar{s}s}}\\
&\leq\sum_{r=1}^{m}\frac{\beta_{r}^{-2}|z_{r}|^{2(1-\beta_{r})}}{1+c'_{r}(p)|z_{r}|^{2(1-\beta_{r})}}|E_{r\bar{r}r}|^{2}
\\&+
\beta_{r}^{-2}\beta_{s}^{-2}|z_{r}|^{2(1-\beta_{r})}|z_{s}|^{2(1-\beta_{s})}(-1)^{r+s}(\hat{g}_{s\bar{r}}'+o(1))E_{r\bar{r}r}\overline{E_{s\bar{s}s}}.
\end{align*}
As in the lemma \ref{lem-bisec-2}, we need to control the second term by the diagonal terms. Now we could choose $\lambda_{r}$ large enough,
$r=1,\cdots,m$ so that for some constant $c'_{0}<1$ the following inequality holds for all points lying in some tubular neighborhood of the
simple normal crossing divisor $D:$
\begin{align*}
\begin{bmatrix}
  0                   & -\hat{g}_{1\bar{2}}' & \cdots  &  (-1)^{m+1}\hat{g}_{1\bar{m}}'\\
  -\hat{g}_{2\bar{1}}'  &  0                 & \cdots  &  (-1)^{m+2}\hat{g}_{2\bar{m}}'\\
                      &                    & \ddots  &                    \\
  (-1)^{m+1}\hat{g}_{m\bar{1}}'  &                    & \cdots  &  0                 \\
\end{bmatrix}
<c_{0}'
\begin{bmatrix}
\hat{g}_{1\bar{1}}' &  0                  & \cdots  &  0\\
0                  &\hat{g}_{2\bar{2}}'   & \cdots  &  0\\
                   &                     & \ddots  &   \\
0                  &  0                  & \cdots  &  \hat{g}_{m\bar{m}}'\\
\end{bmatrix}
\end{align*}
Then the corresponding estimate will follow as above and the following proposition holds:

\begin{proposition}\label{prop-bisec-3}
In general situations (triple or higher multiple singularities with arbitrary cone angles), there exist a smooth function
$\varphi_{0}=\sum\limits_{i=1}^{m}\varphi_{r}(||S_{r}||_{r}^{2})$ where $\varphi_{r}(t)=\lambda_{r}t$ for some enough large constants
$\lambda_{r},r=1,\cdots,m$ near 0 and vanish when t is larger, such that there exists a
 uniform constant $C>0$ such that for every $p\in M,$ and new metric $\dr'_{\epsilon}=\dr_{\epsilon}+\ddb\varphi,$
\begin{equation}\label{eq:bisec3}
 \sum_{i,j,k,l}\Pi_{i\bar{j}k\bar{l}}(p)\leq C+\sum_{r=1}^{m}\frac{\beta_{r}^{2}(\beta_{r}-1)^{2}}{|z_{r}|^{2(2-\beta_{r})}}|\eta^{r}|^{2}|\nu^{r}|^{2}.
\end{equation}
\end{proposition}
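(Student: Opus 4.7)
The plan is to build directly on the framework established for Lemma~\ref{lem-bisec-2}, in which the bound on $\sum\Pi_{i\bar{j}k\bar{l}}(p)$ was reduced, via the splitting $g_{i\bar{j},k}=A_{i\bar{j}k}+B_{i\bar{j}k}+D_{i\bar{j}k}+E_{i\bar{j}k}$, to three contributions: $||A+B+D||^{2}$, the cross term $2\,\text{Re}\,\langle A+B+D,E\rangle$, and $||E||^{2}$. Inspecting those computations shows that the first two remain controlled when $\hat{g}$ is replaced by any \ka\ metric $\hat{g}'$ in the same class that differs from $\hat{g}$ only by a smooth $O(1)$ perturbation of its coefficients and first two derivatives. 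The entire difficulty is therefore concentrated in $||E||^{2}$, and more precisely in the cross terms $\sum_{r\neq s}g'^{r\bar{s}}E_{r\bar{r}r}\overline{E_{s\bar{s}s}}$: when $m\geq 3$ these cannot be absorbed into the diagonal $|E_{r\bar{r}r}|^{2}$-terms using $\hat{g}$ alone.

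The remedy I would propose is to set $\varphi_{0}(z):=\sum_{r=1}^{m}\lambda_{r}\chi_{r}(||S_{r}||_{r}^{2})$, where each $\chi_{r}:[0,\infty)\to[0,\infty)$ is smooth with $\chi_{r}(t)=t$ on a right neighborhood of $0$ and constant for large $t$, so that $\ddb\varphi_{0}$ is a bounded smooth form that coincides with $\lambda_{r}\ddb||S_{r}||_{r}^{2}$ near each $D_{r}$. Choosing the cutoff scales and the $\lambda_{r}$ compatibly guarantees that $\dr_{0}+\ddb\varphi_{0}$ remains \ka\ and equivalent to $\dr_{0}$. A direct computation in the coordinates of Lemma~\ref{lem-coordinate} then gives, at any point $p$ in the tubular neighborhood where every $\chi_{r}$ is still linear,
\begin{equation*}
\hat{g}'_{i\bar{j}}(p)=\hat{g}_{i\bar{j}}(p)+\sum_{r=1}^{m}\lambda_{r}\delta_{ir}\delta_{jr}+O\bigl(\textstyle\sum_{r}|z_{r}|^{2}\bigr),
\end{equation*}
while $\hat{g}'_{i\bar{j},k}(p)$ and $\hat{g}'_{i\bar{j},k\bar{l}}(p)$ differ from their unprimed counterparts only by $O(1)$. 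Thus the diagonal entries $\hat{g}'_{r\bar{r}}(p)$ can be inflated at will by enlarging $\lambda_{r}$, whereas the off-diagonal entries $\hat{g}'_{r\bar{s}}(p)$ for $r\neq s$ stay uniformly bounded. A Gershgorin-style diagonal-dominance argument then lets me pick the $\lambda_{r}$ large enough that the displayed matrix inequality holds for some fixed $c'_{0}<1$ uniformly on the tubular neighborhood.

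With this matrix inequality in hand, applying Lemma~\ref{lem-diagonal} to $\hat{g}'$ to expand $g'^{r\bar{s}}$ and substituting into $||E||^{2}$ lets me absorb the off-diagonal cross term into the diagonal contribution with factor $c'_{0}+o(1)$, exactly as in the two-divisor case of Lemma~\ref{lem-bisec-2}. Combining this with the unchanged $O(1)$ bounds for $||A+B+D||^{2}$ and for the cross terms $2\,\text{Re}\,\langle A+B+D,E\rangle$, and choosing the auxiliary $\delta>0$ so that $c'_{0}+2\delta<1$, yields \eqref{eq:bisec3} at every $p$ in the tubular neighborhood; outside this neighborhood $\dr$ is uniformly equivalent to a smooth \ka\ metric and its bisectional curvature is automatically bounded. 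The step I expect to be the main obstacle is verifying the diagonal-dominance inequality \emph{uniformly} in $p$, not merely pointwise: one must couple the sizes of the $\lambda_{r}$ with the supports of the cutoffs $\chi_{r}$, and ensure that on the annular region where some $\chi'_{r}\neq 0$ no new unbounded term enters the curvature. Exploiting the compactness of $M$ to get uniform bounds on $|\hat{g}_{r\bar{s}}|$, and choosing the cutoff regions after fixing the $\lambda_{r}$, is the technical point that makes the argument go through.
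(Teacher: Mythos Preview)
Your proposal is correct and follows essentially the same approach as the paper: modify the background metric by $\sum_{r}\lambda_{r}\chi_{r}(\|S_{r}\|_{r}^{2})$ so that the diagonal entries $\hat{g}'_{r\bar r}$ are inflated while the off-diagonals and all first and second derivatives change only by $O(1)$, then use diagonal dominance to force the matrix inequality with some $c'_{0}<1$ and rerun the $\|E\|^{2}$ estimate from Lemma~\ref{lem-bisec-2}. The paper's argument is exactly this, though it is somewhat terser about the cutoff and uniformity issues you flag; your remarks on coupling the choice of $\lambda_{r}$ with the cutoff supports and invoking compactness of $M$ are the right way to close those gaps.
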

Combine Lemma \ref{lem-bisec-2} and this proposition, the proof of Theorem \ref{thm-upper} is finished.\\

As a possible application of this curvature estimate, we remark that if the linear theory in \cite{Do} \cite{JMR} could be established for
simple normal crossing divisor, this curvature estimate could enable us to establish A priori Laplacian estimate for the solutions along the continuity path.
For simplicity we only establish the A priori estimate for the solution to the conical \ka-Einstein equation \eqref{eq:cke-cma}. We denote
$$\dr_{bg}:=\dr_{0}+\epsilon\sum_{r=1}^{m}\ddb||S_{r}||^{2\beta_{r}}+\ddb\varphi_{0}$$ as the background conic metric whose bisectional curvature has an upper bound $\Lambda$ on $M\setminus D,$ where $\epsilon$ is small and $\varphi_{0}\in PSH(M,\dr_{0})\cap C^{\infty}(M).$ Actually it is equivalent to the standard conic metric $\dr_{cone}$ defined in \eqref{eq:def-conic}. For any $\delta>0,$ put
$$Q:=\log(\prod_{r=1}^{m}||S_{r}||^{2\delta}tr_{\dr_{\varphi}}\dr_{bg})-
A(\varphi-\varphi_{0}-\epsilon\sum_{r=1}^{m}||S_{r}||^{2\beta_{r}}).$$  On $M\setminus D,$ as $\dr_{\varphi}$ has positive Ricci curvature and the bisectional curvature of $\dr_{bg}$ is less than $\Lambda,$ using Chern-Lu's Inequality we immediately have that
$$\Delta Q\geq (A-\Lambda)tr_{\dr_{\varphi}}\dr_{bg}+\delta\sum_{r=1}^{m} tr_{\dr_{\varphi}}R(||\cdot||_{r})-An.$$
Take $A=\Lambda+1,$ as an assumption for the A priori bound, we know that $tr_{\dr_{\varphi}}\dr_{bg}$ is bounded and moreover satisfies H\"older continuity near the divisor,  we know that the maximal of $Q$ on $M$ is attained at $p\in M\setminus D,$ where $$tr_{\dr_{\varphi}}\dr_{bg}(p)\leq-\delta\sum_{r=1}^{m} tr_{\dr_{\varphi}}R(||\cdot||_{r})+(\Lambda+1)n\leq(\Lambda+1)n,$$ by maximal principle. As $\varphi,\varphi_{0}$ are uniformly bounded on $M$ we conclude that it holds that $tr_{\dr_{\varphi}}\dr_{bg}\leq\frac{C_{0}}{\prod_{r=1}^{m}||S_{r}||^{2\delta}}.$ Let $\delta$ tend to 0 we prove that $$\dr_{\varphi}\geq C_{1}\dr_{bg}.$$ By the conical Monge-Ampere equation \eqref{eq:cke-cma} it is easy to
see that $\dr_{\varphi}^{n}$ is equivalent to $\dr_{bg}^{n}$ so $$\dr_{\varphi}\leq C_{2}\dr_{bg}.$$ Thus this estimate could generalize the original Laplacian estimate in \cite{JMR} to simple normal crossing divisor case.

\section{Further discussions}

By the proof of Theorem \ref{mainthm} we notice that the existence of \ka-Einstein metric guarantees the properness of the smooth Ding functional or Mabuchi functional by \cite{Ti97}, which is used to derive the properness of twisted functionals. In this sense, under the assumptions of divisors, the existence of one conical \ka-Einstein metric implies the existence of conical \ka-Einstein metrics with smaller cone angles and the constant $\mu:$

\begin{theorem}\label{thm-modify}
Given a Fano manifold $(M,\dr_{0})$ which satisfies all the assumptions in Theorem\ref{mainthm} except for replacing the existence of
\ka-Einstein metric by the existence of one conical \ka-Einstein metric $\dr_{\mu_{0}}\in c_{1}(M)$ such that
$$Ric(\dr_{\mu_{0}})=\mu_{0}\dr_{\mu_{0}}+\sum_{r=1}^{m}2\pi(1-\beta_{r,0})[D_{r}]$$
then for $\mu\in(0,\mu_{0}),$ corresponding conical \ka-Einstein metrics still exist.
\end{theorem}
Note that the properness of the twisted Ding functional corresponding to $\mu_{0}$ could be derived from a
modification of \cite{TZ} and we want to thank Professor X. H. Zhu for pointing out this to us.

Another interesting problem is the requirement of the coefficients $c_{r},\lambda_{r}.$ In one smooth divisor case, by \cite{LS} \cite{SW} such similar requirement is necessary due to the obstruction of log-K stability and
the example of $\mathbb{P}^{2}$ with a quadratic curve shows this point. But in the case of simple normal crossing divisor with $m\geq 2,$ such obstruction will be much more subtle, e.g, toric examples constructed in \cite{SW}. We hope to investigate and analyze more general cases in the future.


\begin{thebibliography}{99}
\bibitem{Ber}{R. Berman, A thermodynamical formalism for Monge-Ampere equations, Moser-Trudinger
 inequalities and \ka-Einstein metrics, Adv. Math. 248 (2013), 1254-1297.}
\bibitem{BBEGZ}{R.J.Berman, S.Boucksom, P.Eyssidieux, V. Guedj, and A. Zeriahi, \ka-Einstein metrics
and the \ka-Ricci flow on log Fano varieties, J. Reine Angew. Math. (2016).}
\bibitem{Bern}{B. Berndtsson, A Brunn-Minkowski type inequality for Fano manifolds and some uniqueness theorems in \ka\ geometry, Invent. Math., 200 (2015), 149-200.}
\bibitem{Br}{S. Brendle, Ricci flat \ka\ metrics with edge singularities. International Mathematics Research
Notices, 2013(24): 5727-5766.}
\bibitem{CGP}{F. Campana, H. Guenancia, M, Paun. Metrics with cone singularities along normal crossing divisors
 and holomorphic tensor fields, Ann. Scient. Ec. Norm. Sup. 46 (2013), p. 879-916.}
\bibitem{CCT}{J. Cheeger, T. H. Colding and G. Tian, On the singularities of spaces with
bounded Ricci curvature, Geom. Funct. Anal., 12 (2002), 873-914.}
\bibitem{CDS1}{X.X. Chen, S. Donaldson and S. Sun, \ka-Einstein metrics on Fano manifolds, I:
approximation of metrics with cone singularities, J. Amer. Math. Soc. 28 (2015), 183-197.}
\bibitem{CDS2}{X.X. Chen, S. Donaldson and S. Sun, \ka-Einstein metrics on Fano manifolds, II:
limits with cone angle less than $2\pi$, J. Amer. Math. Soc. 28 (2015), 199-234.}
\bibitem{CDS3}{X.X. Chen, S. Donaldson and S. Sun, \ka-Einstein metrics on Fano manifolds, III:
limits as cone angle approaches $2\pi$ and completion of the main proof, J. Amer. Math. Soc. 28 (2015), 235-278.}
\bibitem{Di}{W.Y. Ding, Remarks on the existence problem of positive \ka-Einstein metrics, Math.
Ann. 282 (1988), 463-471.}
\bibitem{DT}{W.Y. Ding, G. Tian, \ka-Einstein metrics and the generalized Futaki invariants.
 Invent. Math., 110 (1992), 315-335.}
\bibitem{DS}{V. Datar and J. Song, A remark on Kahler metrics with conical singularities along a simple normal crossing divisor,
Bull. of Lond. Math. Soc. 47 (2015), no. 6, 1010-1013.}
\bibitem{Dem1}{J. P. Demailly, Regularization of closed positive currents and intersection theory. J. Alg. Geom.
1 (1992), no. 3, 361-409.}
\bibitem{Dem2}{J. P. Demailly, Appendix to I. Cheltsov and C. Shramov¡¯s article ¡°Log canonical thresholds
of smooth Fano threefolds¡±: On Tian¡¯s pr and log canonical thresholds. Uspekhi Mat. Nauk,
63:5(383) (2008), 73-180.}
\bibitem{Do}{S.K. Donaldson, \ka\ metrics with cone singularities along a divisor, Essays on Mathematics
and its applications (P.M. Pardalos et al., Eds.), Springer, 2012, pp. 49-79.}
\bibitem{GP}{H. Guenancia, M. Paun, Conic singularities metrics with perscribed Ricci curvature: the case
of general cone angles along normal crossing divisors. J. Differential Geom. 103 (2016), no. 1, 15-57.}
\bibitem{Je}{T. Jeffres, Uniqueness of \ka-Einstein cone metrics, Publ. Math. 44 (2000).}
\bibitem{JMR}{T. D. Jeffres, R. Mazzeo, Y. A. Rubinstein, \ka-Einstein metrics with edge singularities,
 with an appendix by C. Li and Y. A. Rubinstein.  Ann. Math. 183 (2016), 95-176.}
\bibitem{Ko}{S. Kolodziej, The complex Monge-Ampere equation, Acta Math. 180 (1998),69-117}
\bibitem{K}{S. Kolodziej, H\"{o}lder continuity of solutions to the complex
Monge-Ampere equation with the right-hand side in $L^{p}$: the case of compact \ka\ manifolds. Math. Ann. 342 (2008), 379-386.}
\bibitem{Li}{C. Li, \ka-Einstein metrics and K-stability. Princeton thesis, May, 2012.}
\bibitem{LS}{C. Li and S. Sun, Conic \ka-Einstein metrics revisited, Comm. Math. Phys. 331 (2014), no. 3, 927-973.}
\bibitem{LZ}{J. Liu and X. Zhang, The conical \ka-Ricci flow on Fano manifolds, Adv. Math. 307 (2017), 1324-1371.}
\bibitem{Lu}{Y. C. Lu, Holomorphic mappings of complex manifolds, J. Diff. Geom. 2 (1968), 299-312.}
\bibitem{Ru}{Y. Rubinstein, Smooth and singular Kahler-Einstein metrics in: Geometric and Spectral Analysis,
(P. Albin et al., Eds.), Contemp. Math. 630, AMS and Centre Recherches Mathematiques, 2014, pp. 45-138.}
\bibitem{Sh1}{L. M. Shen, Smooth approximation of conic \ka\ metric with lower Ricci curvature bound, Pacific. J. Math. 284 (2016), no. 2, 455-474.}
\bibitem{Sh2}{L. M. Shen, Maximal time existence of unnormalized conical \ka-Ricci flow, arXiv:1411.7284., accepted by J. Reine Angew. Math.}
\bibitem{Sh3}{L. M. Shen, $C^{2,\alpha}$-estimate for conical \ka-Ricci flow. arXiv:1412.2420., accepted by Calc. Var. and PDE.}
\bibitem{ST1}{J. Song and G. Tian, The \ka-Ricci flow on surfaces of positive Kodaira dimension,
 Invent. math. 170(2007), no. 3, 609-653.}
\bibitem{ST2}{J. Song and G. Tian, Canonical measures and \ka-Ricci flow. J. Amer. Math. Soc. 25(2012), 303-353.}
\bibitem{ST3}{J. Song and G. Tian, The \ka-Ricci flow through singularities. Invent. math. 207(2017), no. 2, 519-595.}
\bibitem{SW}{J. Song and X. W. Wang, The greatest Ricci lower bound, conical Einstein metrics and the
Chern number inequality, Geom. Topol. 20 (2016), 49-102.}
\bibitem{Ti87}{G.Tian, On \ka-Einstein metrics on certain \ka\ Manifolds with $C_{1}(M)>0$,
Invent. Math., 89(1987), 225-246}
\bibitem{Ti90}{G.Tian, On Calabi's conjecture for complex surfaces with positive first Chern class.
 Invent. Math. 101(1990), 101-172.}
\bibitem{Ti97}{G.Tian, \ka-Einstein metrics with positive scalar curvature. Invent. Math., 130(1997), 1-39.}
\bibitem{Ti00}{G.Tian, Canonical Metrics on \ka\ Manifolds. Lectures in Mathematics ETH Z\"{u}rich,
Birkh\"{a}user Verlag, 2000.}
\bibitem{Ti12}{G.Tian, K-stabilitiy and \ka-Einstein metrics. Comm. Pure Appl. Math. 68 (2015), no. 11, 1085-1156.}
\bibitem{Ti14}{G.Tian, A 3rd derivative estimate for conic \ka\ metric,  Chin. Ann. Math. Ser. B (2017) 38: 687-694.}
\bibitem{TY}{G. Tian and S. T. Yau, Complete \ka\ manifolds with zero Ricci
curvature. I. J. Amer. Math. Soc. 3 (1990), no. 3, 579-609.}
\bibitem{TZ}{G. Tian and X. Zhu, Properness of log $F$-functionals. arXiv:1504.03197.}
\bibitem{Yao}{C. J. Yao, Existence of weak conical \ka-Einstein metrics along
smooth hypersurfaces. Math. Ann. 362 (2015), no. 3-4, 1287-1304.}
\bibitem{Yau}{S.T.Yau, On the Ricci curvature of a compact \ka\ manifold and the complex Monge-Ampere equation.
I. Comm. Pure Appl. Math. 31(1978), no. 3, 339-411.}
\end{thebibliography}
\end{document}